\title{Complexity lower bound for typical right triangular billiards}
\author{ Dmitri Scheglov}
\theoremstyle{plain}
\newtheorem{Lemma}{Lemma}[section]
\newtheorem{thm}{Theorem}
\begin{document}

\maketitle
\setlength{\parindent}{0pt}

\begin{abstract}

We prove that for a typical (in the Lebesgue measure sense) right triangular billiard and for any $\epsilon>0$ the complexity function $P_n$ satisfies: $\limsup \frac{P_n}{n^{\frac{2}{\sqrt{3}}-\epsilon}}>0$
\end{abstract}
\section{Overview}

A generalized diagonal of a polygonal billiard is an orbit which connects two
vertices of the polygon. One of the characteristics of billiard dynamics is the complexity function
$P_n$, which counts the number of generalized diagonals of length no greater than $n$. Here
by length we mean the number of reflections. See also [1], [4], [5], [6] for other notions of billiard complexity, related to our definition.
\

\

Katok [9] proved the subexponential estimate.
\

\

\begin{thm} (Katok, 1987). For any polygon: $\lim \frac{ln(P_n)}{n}=0$
\end{thm}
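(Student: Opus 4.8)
\emph{Proof strategy.}
The reductions below are routine and the real content is a subexponential bound on the number of symbolic ``cells'' of the billiard; the plan is to set up those cells, record an elementary polynomial bound on the number of directions that arise under unfolding, and then extract the subexponential estimate from the flatness of the billiard. Let $P$ have $k$ sides with side directions $\theta_1,\dots,\theta_k\in\mathbb{R}/\pi\mathbb{Z}$, and let $M$ be the phase space of the billiard map $T$ (points of $\partial P$ carrying an inward unit vector). For a word $w=(s_1,\dots,s_m)$ of side labels let $C_w\subseteq M$ be the set of $(x,v)$ whose orbit reflects off $s_1,\dots,s_m$ in that order; the nonempty cells $C_w$ partition $M$ away from the singular set, which is a finite union of analytic curves. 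Since $C_w$ is cut out by requiring that at each step $j\le|w|$ the unfolded orbit exit the current copy through the prescribed side, $\partial C_w$ is contained in at most $k|w|\le kn$ analytic curves, one for each step $j$ and each vertex that could be grazed at step $j$. A generalized diagonal of combinatorial length $m\le n$ grazes a vertex at its first and at its last reflection and lies in a single cell strictly in between, hence on the intersection of two of these curves; two distinct analytic curves meet finitely often, so each cell accounts for $O(n^2)$ generalized diagonals. Thus $P_n\le C_1(P)\,n^{3}\,p(n)$, where $p(n)=\#\{w:|w|\le n,\ C_w\ne\emptyset\}$, and it suffices to prove $\frac1n\ln p(n)\to 0$. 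Dually, $C_w$ is exactly the set of orbits whose unfolding is one fixed ``corridor'' of reflected copies $P_0,P_1,\dots$ of $P$ ($P_{i+1}$ the reflection of $P_i$ across their common side $W_{i+1}$) together with the pencil of parallel straight segments, in some direction $\vartheta$, threading all of them; so $p(n)$ counts corridors of length $\le n$ admitting a threading segment.

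The decisive elementary fact is a \emph{polynomial bound on the wall directions}. The linear part of the rigid motion carrying $P_0$ to $P_j$ is the product $\rho_{s_1}\rho_{s_2}\cdots\rho_{s_j}$ of the circle reflections $\rho_s:\phi\mapsto 2\theta_s-\phi$; an even-length such product is the rotation $\phi\mapsto\phi+2\sum_s c_s\theta_s$ with $c_s\in\mathbb{Z}$, $\sum_s|c_s|\le j\le n$ and $\sum_s c_s=0$, and an odd-length product is one such rotation composed with a further reflection. Hence at most $(2n+3)^k$ distinct linear parts occur over all copies in all corridors of length $\le n$, and the direction of every wall $W_j$ that can ever appear lies in a set $\Theta_n\subset\mathbb{R}/\pi\mathbb{Z}$ with $|\Theta_n|\le C_2(P)\,n^k$ --- only polynomially many. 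Moreover $W_j$ and $W_{j+1}$ are two sides of the same copy $P_j$, so the increment $W_{j+1}-W_j$ belongs to the fixed finite set $\{\theta_a-\theta_b:1\le a,b\le k\}\bmod\pi$; thus along any corridor the wall direction performs a bounded-step walk inside the polynomially small set $\Theta_n$.

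The main step --- and, I expect, the principal obstacle --- is to upgrade ``few wall directions'' to ``subexponentially many corridors''; this is where one must exploit the \emph{flatness} of the billiard, namely that a polygon carries no curvature, so nearby orbits cannot diverge at a uniform exponential rate and corridors can branch only slowly, at vertices. The mechanism I would try to make rigorous: fix $\vartheta$ and sweep the threading segment across a transversal; inside one cell the entire unfolded corridor is a \emph{fixed} planar configuration, so the symbolic type changes only when the sweeping segment grazes one of the finitely many unfolded vertices of $P_0,\dots,P_n$, and grazing a vertex of $P_j$ modifies the corridor only from step $j$ onward. Projecting orthogonally to $\vartheta$ onto a ``screen'', a corridor survives only while the nested intervals $\bigcap_{i\le j}\pi_\vartheta(P_i)$ stay nonempty, and since each $P_i$ is a rigid --- hence non-expanding --- copy of $P$, each step moves and shrinks the live interval in a controlled way, so provided no wall is nearly parallel to $\vartheta$ this one-dimensional ``screen dynamics'' can branch at most $\mathrm{poly}(n)$ times. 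The remaining, genuinely delicate, point is the contribution of walls nearly parallel to $\vartheta$: one would partition $\mathbb{R}/\pi\mathbb{Z}$ into $N=N(\epsilon)$ equal arcs, bound the transverse branching on each arc by $\mathrm{poly}(n)$ as above, and then prove --- this is the heart of the matter, and exactly where non-expansion of the billiard map must be used quantitatively --- that the near-parallel walls add at most $\epsilon n$ to $\ln p(n)$. Summing the $N(\epsilon)$ arc-contributions would then give $\limsup_n\frac1n\ln p(n)\le\epsilon$; letting $\epsilon\to 0$ yields $\frac1n\ln p(n)\to 0$, and hence $\frac1n\ln P_n\to 0$.
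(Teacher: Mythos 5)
This statement is quoted from Katok's 1987 paper (reference [9]); the present paper offers no proof of it, so your sketch can only be judged on its own terms. Your preliminary reductions are fine and standard: the passage from generalized diagonals to counting nonempty symbolic cells (corridors of unfolded copies), up to a polynomial factor, is correct in spirit, and the observation that all wall directions arising in unfoldings of combinatorial length at most $n$ lie in a set of cardinality $O(n^k)$ (because the linear parts are words in $k$ circle reflections, hence rotations by $2\sum_s c_s\theta_s$ with $\sum_s|c_s|\le n$) is a well-known and correct fact.

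However, the proposal has a genuine gap, and you identify it yourself: the step from ``polynomially many wall directions'' to ``subexponentially many corridors'' is not proved, only described as a mechanism you ``would try to make rigorous.'' That step is not a technicality --- it is essentially the whole content of Katok's theorem. Having few directions available does not by itself limit branching: the combinatorial corridor can split at every unfolded vertex, and a priori this allows exponentially many words; your proposed one-dimensional ``screen dynamics'' argument bounds branching only away from walls nearly parallel to the threading direction, and the contribution of the near-parallel walls (which you would need to be at most $\epsilon n$ on the logarithmic scale) is exactly the part left open. Moreover, the intended quantitative use of ``flatness / non-expansion'' is not formulated as a lemma one could check. For comparison, Katok's actual argument does not run through a direct corridor count at all: it combines the vanishing of Lyapunov exponents for the polygonal billiard map (its differential is unipotent in appropriate coordinates), hence zero metric entropy for every invariant measure by the Ruelle inequality, with a variational-principle-type argument adapted to the singularities to conclude that the symbolic complexity, and with it the number of generalized diagonals, grows subexponentially. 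So as written your text is a plausible research plan rather than a proof; to complete it along your lines you would have to supply precisely the branching estimate you flag as ``the heart of the matter,'' or else switch to an entropy-theoretic argument of Katok's type.
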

\

Masur [11], [12] using Teichmuller theory proved quadratic estimates for any
rational-angled polygon:
\begin{thm} (Masur, 1990). For any polygon with angles in $\pi \mathbb{Q}$ there are constants $C_1, C_2 > 0$ such that: $C_1 · n^2<P_n<C_2 n^2$
\end{thm}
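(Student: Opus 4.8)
\section*{Proof proposal}

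The plan is to pass from the billiard to its unfolding, restate the theorem as a count of saddle connections on a translation surface, and treat the two inequalities by separate arguments. Let $\Pi$ be a polygon all of whose angles lie in $\pi\mathbb{Q}$. Reflecting $\Pi$ through the group generated by reflections in its sides produces finitely many copies which glue into a closed translation surface $S=S(\Pi)$ of area $\asymp_\Pi 1$, with finitely many cone points at the preimages of the vertices; billiard trajectories in $\Pi$ correspond to the straight-line flow on $S$, and a generalized diagonal with $k$ reflections corresponds to a saddle connection of $S$ that crosses the union of the copy-edges exactly $k$ times. A geodesic sweeps less than a half-turn of cone angle near any point it passes, hence crosses only $O_\Pi(1)$ copy-edges near each cone point it approaches; together with a bounded crossing rate in the thick part this yields, for fixed $\Pi$, a comparison $c_\Pi^{-1}\,\ell\le k\le c_\Pi\,\ell$ between the number of reflections $k$ and the flat length $\ell$ of the corresponding saddle connection. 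It therefore suffices to prove $c_1 L^2\le N_{\mathrm{sc}}(S,L)\le c_2 L^2$, where $N_{\mathrm{sc}}(S,L)$ is the number of saddle connections of $S$ of flat length at most $L$; the constants may depend on $S$.

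The upper bound is the softer half: a generalized diagonal with $\le n$ reflections is a concatenation of $\le n$ segments of length $\le\operatorname{diam}(\Pi)$, so $P_n\le N_{\mathrm{sc}}(S,\operatorname{diam}(\Pi)\cdot n)$, and it is enough to bound $N_{\mathrm{sc}}(S,L)$. Here I would invoke Masur's quadratic upper bound, valid for an arbitrary translation surface — the argument that among saddle connections of length in $[L,2L]$ no two can be too nearly parallel without forcing either a strictly shorter saddle connection or a thin cylinder, which caps their number at $O(L^2)$ (equivalently, the Siegel--Veech estimate that the saddle connections of length $\le 1$ fill a region of area $O(1)$).

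The lower bound $N_{\mathrm{sc}}(S,L)\ge c_1 L^2$ is the substantive part and is where Teichmüller dynamics enters; the scheme I would follow is Masur's. Every translation surface carries a saddle connection of length $\lesssim\sqrt{\operatorname{Area}}$, from a Delaunay-type geodesic triangulation; more generally, every surface in the compact set $K_\varepsilon$ of unit-area surfaces with systole $\ge\varepsilon$ has at least $c(\varepsilon)$ saddle connections of length $\le C(\varepsilon)$ (bounded systole and diameter, then triangulate). By Kerckhoff--Masur--Smillie almost every direction $\theta$ on $S$ is uniquely ergodic, so by Masur's criterion the Teichmüller geodesic $t\mapsto g_t r_\theta S$ with $g_t=\operatorname{diag}(e^{t},e^{-t})$ recurs to $K_\varepsilon$; fixing $\varepsilon$ gives a set $R$ of directions of positive measure together with recurrence times. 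Pulling the short saddle connections of $g_t r_\theta S\in K_\varepsilon$ back by $(g_t r_\theta)^{-1}$ produces saddle connections of $S$ of flat length $\asymp e^{t}$ whose direction lies within $O(e^{-2t})$ of $\theta+\tfrac{\pi}{2}$; so as $\theta$ ranges over $R$ at a fixed scale $e^{t}\asymp s$ one gets $\gtrsim |R|\,s^{2}$ pairwise distinct saddle connections of flat length $\lesssim s$, which for $s\asymp L$ is the sought $\gtrsim L^2$.

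The main obstacle is precisely this last count: converting the qualitative recurrence furnished by Kerckhoff--Masur--Smillie into a quadratic lower bound requires controlling \emph{when} the recurrence times occur relative to the target scale $L$ — so that for a definite proportion of $\theta\in R$ there is a return with $e^{t}\asymp L$ — and verifying that distinct $(\theta,t)$ yield distinct saddle connections without multiplicity loss. This bookkeeping is the heart of Masur's theorem and the place where the dynamics of the $SL(2,\mathbb R)$-action on moduli space is indispensable, rather than soft geometry. Once the count on $S$ is in hand, the remaining points — vertices of $\Pi$ whose preimages happen to be regular, and the comparison between the reflection count and the flat length — are routine.
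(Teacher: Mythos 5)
This statement is quoted in the paper as background (Theorem 2, attributed to Masur [11], [12]); the paper gives no proof of it, so your attempt can only be measured against Masur's argument, which is what you are sketching. The reduction to saddle-connection counting on the unfolded translation surface, the comparability of reflection count with flat length for a fixed rational polygon, and the outline of the quadratic upper bound (no two saddle connections of comparable length can be too nearly parallel without producing a shorter one or a thin cylinder) are all correct in substance. But the lower bound, which you yourself call the substantive half, is not proved: you describe the intended mechanism (recurrence of $g_t r_\theta S$ to a thick compact set, pulling back short saddle connections to get $\gtrsim L^2$ of them) and then explicitly defer the step that makes it quantitative — arranging returns at a definite proportion of directions $\theta$ at times $e^{t}\asymp L$, and showing distinct $(\theta,t)$ give distinct saddle connections. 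That deferred step is the theorem; without it the proposal establishes only the upper bound.

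There is also a logical error in how you try to obtain the recurrence itself. Masur's criterion runs in the opposite direction: recurrence of the Teichm\"uller geodesic implies unique ergodicity of the vertical foliation, not conversely; there exist uniquely ergodic directions whose geodesics diverge in moduli space. So Kerckhoff--Masur--Smillie plus ``Masur's criterion'' does not furnish the positive-measure set $R$ of directions with returns to $K_\varepsilon$. What Masur actually uses is a separate recurrence statement for a fixed surface and almost every direction, obtained from the finiteness of the natural invariant measure and the $SL(2,\mathbb{R})$-action on the moduli space (in modern language, quantitative nondivergence / positive density of return times), and it is exactly this input that lets one hit the scale $e^{t}\asymp L$ for a fixed fraction of directions. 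As written, your argument has no source of recurrence and no count, so the lower bound $C_1 n^2 < P_n$ remains unproven.
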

\

Due to the difficulty of the further progress in non-rational case, Katok formulated the following
question, which he included in his list “Five most resistant problems in dynamics”[10]:
\

\

\textit{Find the upper and lower estimates for} $P_n$.
\

\

The key difficulty here is the luck of structure for general polygonal billiards. In the rational case the billiard flow reduces to the geodesic flow on the translation surface which allows its study by methods of Teichmuller theory. However in irrational case the billiard is only equivalent to the geodesic flow on the flat sphere with finite number of conical singularities, with infinite holonomy group. For the zero measure set of billiards with extremely well-approximated angles ergodicity results were obtained by Katok [8] and then, more explicitly by Vorobets[15]. Recently Chaika and Forni [2] gave an example of a weak-mixing billiard, essentially using rational methods.
\

\

Regarding billiard complexity, in works [13], [14] we developed a bootstrapping and indexed partition technique allowing in its strongest form the stretched exponential estimate for typical triangular billiards.
\

\begin{thm}(S, 2012) For a typical (in the Lebesgue measure sense) triangular billiard and any $\epsilon>0$ the following estimate takes place: $P_n<Ce^{n^{\sqrt{3}-1+\epsilon}}$.
\end{thm}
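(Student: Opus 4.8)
The plan is to refine the crude combinatorial count of generalized diagonals by a bootstrap over scales, in the spirit of the two earlier works cited above. Let $T$ be the triangle with angles $\alpha,\beta,\gamma$. Unfolding the billiard, a generalized diagonal of combinatorial length $n$ becomes a straight segment joining a vertex of a fixed copy $T_0$ of $T$ to a vertex of a copy $T_n=w\,T_0$, where $w=s_{i_1}\cdots s_{i_n}$ is a reduced word ($i_{k+1}\neq i_k$) in the three side reflections of $T$. A triple consisting of a reduced word $w$, an endpoint vertex of $T_0$ and an endpoint vertex of $T_n$ determines the segment and hence at most one diagonal, so $P_n\le C\cdot 2^{\,n}$. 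The entire problem is to replace the factor $2^{\,n}$ here by $e^{\,n^{\sqrt3-1+\epsilon}}$; the hypothesis ``typical'' will be used only through Diophantine properties of $\alpha,\beta,\gamma$ — for a.e.\ choice the convergent denominators of each angle grow tamely, $q_{k+1}\le q_k^{1+o(1)}$ with $\log q_k\asymp k$ — which is what prevents a trajectory of prescribed length from shadowing a vertex too closely.

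\emph{Indexed partition by near-vertex encounters.} I would split the set of length-$n$ diagonals according to the pattern of times at which the unfolded trajectory passes within a threshold $\delta=\delta(n)$ of some vertex; this pattern is the ``index''. If a diagonal has \emph{no} $\delta$-encounter on $[0,n]$, its trajectory stays $\delta$-away from every vertex, so its cutting sequence is forced to agree with that of a genuinely periodic (rational-type) trajectory through the same corridor, and the number of admissible cutting sequences of such diagonals is merely polynomial in $n$ and $1/\delta$ — a quantitative form of the mechanism behind Katok's sub-exponential estimate. If a diagonal \emph{does} have $\delta$-encounters, at times $k_1<\dots<k_r$, these are the only places where the cutting sequence can branch (a small change of direction at $k_j$ makes the trajectory actually hit a vertex), the branching between consecutive encounters is again polynomially controlled, and the diagonal is reconstructed, up to polynomial ambiguity, from the $r$ sub-segments $[k_j,k_{j+1}]$, each of which is $\delta$-close to an honest generalized diagonal of length $k_{j+1}-k_j\le n$.

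\emph{Bootstrap.} Feeding a provisional estimate $P_m\le e^{\,m^{a}}$, valid trivially with $a=1$, into this reconstruction and optimizing the threshold $\delta$ — which simultaneously controls the typical number $r$ of encounters and the typical gap length — replaces $a$ by a strictly smaller exponent $a'=h(a)$ with $h(1)<1$. Iterating gives $a_k\downarrow\rho$, where $\rho$ is the stable fixed point of the optimization; matching the asserted bound, $\rho$ is the positive root of $\rho^{2}+2\rho-2=0$, i.e.\ $\rho=\sqrt3-1$. Collecting the polynomial factors and the $\epsilon$-loss incurred at each of the finitely many stages needed for a fixed $\epsilon$ yields $P_n\le C_\epsilon\,e^{\,n^{\sqrt3-1+\epsilon}}$.

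\emph{Main obstacle.} The whole weight of the argument sits in making $h$ a genuine contraction with $h(1)<1$: Katok's method by itself only gives $P_n=e^{o(n)}$, never an exponent below $1$. One must therefore show that the near-vertex perturbations at $k_1,\dots,k_r$ can be carried out \emph{simultaneously}, so that a length-$n$ diagonal really is pinned down, up to polynomial ambiguity, by an $r$-tuple of honest shorter diagonals with $r$ and the gap lengths both genuinely sub-linear in $n$; if the direction and position errors of the $r$ pieces are allowed to compound, the recursion collapses back to $a'=1$. Controlling this compounding is precisely where the full-measure Diophantine hypothesis is indispensable — one needs that for the typical angles a trajectory of length $m$ cannot come within a fixed negative power of $m$ of a vertex unless it is already essentially a short diagonal — and one must also verify that the partition remains self-similar enough across the scales $n\mapsto n^{a'}$ for the iterated exponent to converge to $\sqrt3-1$ rather than merely to $1$.
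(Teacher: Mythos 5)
The theorem you are addressing is quoted in this paper from reference [13] and is not proved here, so there is no internal argument to compare against; judged on its own terms, your text is a plan rather than a proof, and the decisive steps are missing. First, the claim at the heart of your indexed partition --- that a diagonal with no $\delta$-encounter has its cutting sequence ``forced to agree with that of a genuinely periodic (rational-type) trajectory'' and that the number of admissible cutting sequences of such diagonals is polynomial in $n$ and $1/\delta$ --- is asserted without any argument. For an irrational triangle the unfolding (holonomy) group is infinite and typically dense, which is exactly why the problem is hard: staying $\delta$-away from vertices does not by itself confine the trajectory to polynomially many combinatorial corridors, and Katok's mechanism yields only $e^{o(n)}$, never a polynomial count. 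Since your entire recursion is built on this polynomial branching control between encounters, the gap is not peripheral but foundational.

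Second, the bootstrap itself is never carried out. You state explicitly that the whole weight of the argument is in showing that the map $a\mapsto h(a)$ is a contraction with $h(1)<1$ and that the perturbations at the encounter times $k_1,\dots,k_r$ do not compound, but you supply no inequality realizing this: the Diophantine hypothesis is invoked as ``indispensable'' without a single concrete estimate tying $\delta$, the number $r$ of encounters, the gap lengths, and the convergent denominators $q_k$ together. Finally, the exponent is fitted rather than derived: the fixed-point equation $\rho^{2}+2\rho-2=0$ is obtained by ``matching the asserted bound,'' i.e.\ by reading the answer off the statement to be proved, whereas a proof must produce the quantitative recursion first and extract $\sqrt{3}-1$ from it. As it stands, the proposal identifies a plausible strategy (indexed partitions plus bootstrapping, in the spirit of [13]) but proves neither the branching bound nor the recursion, so it does not establish the theorem.
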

\

\begin{thm}(S, 2020) For a typical (in the Lebesgue measure sense) triangular billiard and any $\epsilon>0$ the following estimate takes place: $P_n<Ce^{n^{\epsilon}}$.
\end{thm}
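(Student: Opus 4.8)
The plan is to reduce the estimate to a combinatorial counting problem and then run a multiscale (``indexed partition'') renormalization keyed to simultaneous rational approximations of the angle triple $\vec\alpha=(\alpha_1,\alpha_2,\alpha_3)$. A generalized diagonal of length $\le n$ unfolds to a straight segment crossing a fan of at most $n$ reflected copies of the triangle, so $P_n$ is bounded by a polynomial in $n$ times the number $W(n)$ of words $w\in\{1,2,3\}^{\le n}$ that arise as the sequence of sides met by an actual billiard trajectory (equivalently, the number of sectors into which the length-$\le n$ cutting behaviour partitions the circle of directions). Thus it suffices to prove $W(n)\le C e^{n^{\epsilon}}$.

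The hypothesis ``typical'' enters only through Diophantine properties of $\vec\alpha$. For Lebesgue-a.e. $\vec\alpha$ I would select an increasing sequence of scales $q_1<q_2<\cdots$, common denominators of simultaneous rational approximations of $\alpha_1/\pi,\alpha_2/\pi$, so that simultaneously: (i) $\|q_k\alpha_i/\pi\|\le q_k^{-\delta}$ for a fixed $\delta>0$, whence the angular discrepancy accumulated over $j$ reflections between the billiard in $\vec\alpha$ and the billiard in the $q_k$-rational triple is at most $j\,q_k^{-1-\delta}$; and (ii) the gaps are tame, $q_{k+1}\le q_k^{1+\epsilon'}$ for all large $k$. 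Item (ii) is a Borel--Bernstein type statement; securing (i) and (ii) on a single full-measure set of $\vec\alpha$ is a standard but slightly delicate metric number theory argument.

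The core is the multiscale decomposition together with a bootstrapping inequality. Given $n$, let $q_k=q_k(n)$ be the least scale for which a trajectory of length $n$ in $\vec\alpha$ carries the same cutting sequence as its $q_k$-rational model \emph{except} during passages very close to a vertex; by (i) this threshold has the form $n\le q_k^{c}$ for a fixed $c>0$, and then (ii) gives $q_k\le n^{(1+\epsilon')/c}$. For a $q_k$-rational triangle Masur's quadratic estimate bounds the number of length-$n$ cutting sequences by a polynomial in $n$ with constant depending on $q_k$; the surplus freedom of the genuinely irrational trajectory is confined to a controlled number of near-vertex events, each contributing only a small branching factor, and to the transitions $q_{k-1}\rightsquigarrow q_k$ between consecutive scales. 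Since (ii) lets one reach a scale with $q_k^{c}\ge n$ in only $O(\log\log n)$ steps, one may feed a provisional bound $W(m)\le e^{m^{\theta}}$ back into the count of transitions and near-vertex events, optimize the block lengths, and iterate; each round strictly lowers $\theta$, and in the limit $W(m)\le C_\epsilon e^{m^{\epsilon}}$ for every $\epsilon>0$. With the polynomial comparison between $P_n$ and $W(n)$ this gives $P_n\le Ce^{n^{\epsilon}}$, with $C$ and the range of validity depending on $\vec\alpha$.

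The main obstacle is exactly the behaviour near the cone points, i.e. the vertices: there the dynamics in $\vec\alpha$ genuinely diverges from every rational model and the convenient ``$q_k$-stability'' of cutting sequences fails. One must control both how often a length-$n$ trajectory is forced to approach a vertex that closely and how much symbolic branching a single such passage can produce. Managing this branching, and calibrating the tradeoff between the tolerated angular error $j\,q_k^{-1-\delta}$ and the block length $j$ so that the bootstrapped exponent actually tends to $0$ --- rather than stalling at a positive value, as it does in the earlier $e^{n^{\sqrt{3}-1+\epsilon}}$ estimate --- is the technically decisive point.
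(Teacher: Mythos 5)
The theorem you are addressing is quoted in this paper from reference [14]; no proof of it appears here, so I can only compare your outline against the method the author describes (a bootstrapping and indexed-partition technique operating directly on the circle of directions), and against what a complete argument would require. As written, your proposal is a strategy sketch rather than a proof: the two points you yourself single out as ``technically decisive'' --- bounding the symbolic branching produced by near-vertex passages, and showing that the bootstrap exponent genuinely tends to $0$ instead of stalling at a positive value --- are exactly the mathematical content of the theorem, and neither is carried out. Saying ``each round strictly lowers $\theta$'' is an assertion, not an inequality; this is precisely where the earlier argument of [13] stopped at the exponent $\sqrt{3}-1$, and the improvement to an arbitrary $\epsilon$ in [14] required a new quantitative mechanism, which your sketch does not supply.

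Beyond the missing bootstrap inequality, two concrete steps in the sketch would fail as stated. First, the appeal to Masur's quadratic estimate for the $q_k$-rational model needs a constant that is controlled in $q_k$: Masur's constants depend on the rational polygon through the stratum (the genus of the associated translation surface grows with the denominator), so ``polynomial in $n$ with constant depending on $q_k$'' is useless unless you prove how that constant grows, and nothing in the outline does. Second, the error-propagation claim that the angular discrepancy after $j$ reflections is at most $j\,q_k^{-1-\delta}$ controls directions only; whether the irrational billiard and its rational model share a cutting sequence is governed by \emph{positional} error in the unfolding, which is of order (geometric length)$\times$(angular error), and the geometric length of a trajectory with $j$ reflections is not uniformly comparable to $j$ (orbits can run nearly parallel to a side, or linger near an acute vertex). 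So the key ``$q_k$-stability except near vertices'' threshold $n\le q_k^{c}$ is not established, and the subsequent counting of near-vertex events and scale transitions has nothing to rest on. In short, the skeleton is plausible and loosely parallel in spirit to the author's indexed-partition bootstrapping, but the decisive estimates are absent, so there is a genuine gap.
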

\

In the current paper we find a \textit{lower} estimate on $P_n$ for right triangular billiards using indexed partitions, however the geometric idea differs from that of [14]. Informally speaking  the key idea of [14] was based on some delicate interplay between the abundance of \textit{long} generalized diagonals and \textit{short} ones. Here we roughly show that relatively small number of generalized diagonals implies the existence of rather short periodic orbits, which in turn produce new \textit{unwanted} generalized diagonals, which gives a contradiction.
The aim of the current paper is to prove the following estimate:

\begin{thm}For a typical (in the Lebesgue measure sense) right triangular billiard and for any $\epsilon>0$ the complexity function $P_n$ satisfies: $\limsup \frac{P_n}{n^{\frac{2}{\sqrt{3}}-\epsilon}}>0$

\end{thm}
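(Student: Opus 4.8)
The plan is to argue by contradiction and exploit the dichotomy announced in the overview: a scarcity of generalized diagonals forces the existence of short periodic orbits, which in turn manufacture generalized diagonals we did not account for. Suppose that for a positive-measure set of right triangles and some $\epsilon>0$ we had $P_n \le C n^{2/\sqrt3-\epsilon}$ for all large $n$. First I would set up the indexed-partition machinery from [13], [14]: for a right triangle with acute angle $\alpha$, unfolding the billiard along a direction produces, up to scale $n$, a configuration of copies whose combinatorics is controlled by the continued-fraction-type expansion of $\alpha/\pi$ (equivalently, by how well $\alpha$ is approximated by rationals with denominator $\lesssim n$). The key quantitative input is that for Lebesgue-typical $\alpha$ the partial quotients grow subpolynomially, so for typical $\alpha$ the unfolding through $n$ reflections looks, to within $n^{o(1)}$, like the rational case with denominator $q \asymp n$. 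This is where the exponent $2/\sqrt3$ will enter: it is the ``rational dimension'' $2$ corrected by the Diophantine cost $1/\sqrt3$ coming from the bootstrapping in Theorem 1.4's proof, read in the opposite direction.

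Second, I would make precise the implication ``few generalized diagonals $\Rightarrow$ short periodic orbit.'' Fix a large scale $N$. Partition phase space (the set of directions, or the cross-section of the billiard flow) into cells on which the symbolic itinerary up to time $N$ is constant; by the indexed-partition estimates the number of such cells is comparable to $P_N$, hence by assumption at most $N^{2/\sqrt3-\epsilon}$ up to $N^{o(1)}$ factors. On any single cell, two distinct directions that have not yet hit a vertex follow parallel unfoldings, so the cell is (essentially) an interval of directions bounded by generalized-diagonal directions. A cell of angular width $w$ that survives $N$ reflections yields, by a standard pigeonhole/return argument on the associated interval exchange-like map, a periodic billiard trajectory of combinatorial length $O(1/w)$: the endpoints of a long thin cell must be identified by the return map, producing a closed orbit whose period is controlled by $1/w$. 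Since the cells partition a set of total measure $\asymp 1$ into $\le N^{2/\sqrt3-\epsilon+o(1)}$ pieces, some cell has width $w \gtrsim N^{-2/\sqrt3+\epsilon-o(1)}$, hence there is a periodic orbit of length $\ell \lesssim N^{2/\sqrt3-\epsilon+o(1)}$, which is strictly shorter than $N$ for $N$ large.

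Third — and this is the heart — I would use the periodic orbit to produce new generalized diagonals and close the loop. A periodic billiard orbit in a polygon lies in a maximal strip of parallel periodic orbits, and the two boundary trajectories of that strip are generalized diagonals (they run into vertices). More importantly, perturbing the triangle (within the positive-measure bad set, which I may assume has density points) breaks the strip and replaces each periodic orbit of length $\ell$ by a cascade of generalized diagonals of length $O(\ell \cdot k)$ for $k = 1, 2, \dots$, with multiplicities growing linearly in $k$; summing these over all the periodic strips found at scales $N/2, N/4, \dots$ down to $1$ gives a lower bound on $P_N$ of order $\sum (\text{number of strips at scale } N/2^j) \cdot (N 2^{-j})$, which I will arrange to exceed $N^{2/\sqrt3-\epsilon}$, contradicting the assumption. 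Iterating the scale is essential: a single scale only recovers what we assumed, but the bootstrapping across dyadic scales (exactly the mechanism of Theorems 1.4–1.5, now run to produce rather than to bound diagonals) amplifies the count past the threshold. To get the statement for \emph{typical} triangles rather than almost-every, I would finally note that the contradiction is derived for every density point of the hypothetical bad set, so the bad set is null; equivalently, run a Borel–Cantelli argument over a sequence $N_k \to \infty$ to conclude $\limsup P_n / n^{2/\sqrt3-\epsilon} > 0$ almost surely.

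The main obstacle I anticipate is the third step: controlling how a periodic strip degenerates into generalized diagonals under perturbation, and in particular guaranteeing that the new diagonals are genuinely \emph{new} (not already counted among the $P_N$ cells) and that their count per strip really grows linearly in the length. This requires a careful local analysis of the flat cone surface near the vanishing strip — essentially understanding the limiting cylinder and its holonomy as the triangle varies — and it is here that the right-angle hypothesis should be used decisively, since reflecting in the right angle gives an involution that pins down the combinatorics of the unfolding. A secondary technical point is bookkeeping the $N^{o(1)}$ Diophantine errors uniformly over the scales $N/2^j$ so that they do not swamp the gain; this should follow from the typical subpolynomial growth of partial quotients, but the uniformity must be stated carefully.
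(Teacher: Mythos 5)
Your overall shape (assume $P_n\le Cn^{2/\sqrt3-\epsilon}$, find a wide cell of directions surviving many reflections, extract a short periodic orbit, derive a contradiction) matches the paper's informal outline, but the two steps that carry all the content are missing or wrong. First, your claim that a cell of angular width $w$ surviving $N$ reflections yields, ``by a standard pigeonhole/return argument,'' a periodic orbit of combinatorial length $O(1/w)$ is not standard and is not what can be proved here. In the paper this is exactly where the right-angle hypothesis and the typicality enter: reflecting the triangle to a rhombus, one builds the rhombus development map $\mathcal{R}$ on the sequence of rotated rhombi $R_n$, which is measure-preserving and driven by the circle rotation by the rhombus angle $\alpha$; a Khintchin/L\'evy-constant estimate on the continued-fraction denominators of $\alpha$ gives a hitting-time bound $L(\mu)<C\mu^{-2-\epsilon}$, and only then a pigeonhole in the invariant measure produces a periodic orbit (necessarily of \emph{even} period) inside a parallel $\mu$-beam of length $C\mu^{-3-\epsilon}$ --- not $O(1/\mu)$. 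The threshold $2/\sqrt3$ is then forced by balancing this exponent $3+\epsilon$ against the survival length $n^{1/(1+\gamma)}$ and the width $n^{1/(1+\gamma)-\gamma-1}$ supplied by the interval-partition lemma (Lemma 3.1); your explanation of the exponent as ``rational dimension $2$ corrected by a Diophantine cost $1/\sqrt3$'' does not correspond to any step of the actual argument, and with your claimed $O(1/w)$ bound the bookkeeping would not produce $2/\sqrt3$ at all.

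Second, your closing step --- perturbing the triangle inside a positive-measure bad set so that periodic strips degenerate into cascades of genuinely new generalized diagonals, then summing over dyadic scales --- is precisely the part you flag as the main obstacle, and it is not substantiated: nothing controls how many new diagonals a vanishing strip produces, nor that they are new, nor does the hypothesis $P_n\le Cn^{2/\sqrt3-\epsilon}$ transfer to the perturbed triangle. The paper needs none of this. It stays with the fixed billiard: the periodic orbit found inside the angular $I$-beam has even period, hence lifts to a closed geodesic on the doubled flat sphere (Remark 1), and by the parallel-transport rigidity of closed flat geodesics (Lemma 2.1) it can be dragged within the beam to the vertex (or to a cut point of $I$). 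The dragged trajectory is still periodic, starts at a vertex, hence ends at a vertex, producing a singular direction strictly inside an interval of the partition whose endpoints were chosen (via Lemma 3.1) to have large indices --- a direct contradiction with the beam containing no vertices up to that combinatorial length. You should replace your perturbation scheme by a fixed-billiard mechanism of this kind, and you must also account for the even/odd period issue and the triangle-to-rhombus reduction ($P^{\triangle}_{3n}\ge P^{\diamondsuit}_n$), neither of which appears in your sketch.
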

\

\section{Development along the trajectory and geodesic flow on the flat punctured sphere}

We remind a useful development construction associated to any
polygonal billiard [7]. We fix a polygon on the plane and consider a time moment
when a particular billiard orbit hits a polygon side. Then instead of reflecting
the orbit we continue it as a straight line and reflect the polygon along the
line. As we continue this process indefinitely the sequence of polygons obtained
this way is called development of the polygon along the orbit. See fig. $1$, showing triangle development.
\

\
\begin{figure}
\psscalebox{1.0 1.0} 
{
\begin{pspicture}(0,-6.554861)(14.78236,3.0313833)
\pspolygon[linecolor=black, linewidth=0.02](2.1789713,-5.581379)(4.3850155,-1.5864611)(7.1300006,-2.409296)
\pspolygon[linecolor=black, linewidth=0.02](2.2026267,-5.586878)(4.402627,-1.5668777)(2.2226267,0.29312226)
\psline[linecolor=black, linewidth=0.03, arrowsize=0.05291667cm 2.0,arrowlength=1.4,arrowinset=0.0]{->}(0.3426268,-2.1668777)(14.742627,-2.1868777)
\pspolygon[linecolor=black, linewidth=0.02](2.1826267,-5.566878)(0.0226268,-1.5468777)(2.2026267,0.31312224)
\pspolygon[linecolor=black, linewidth=0.02](4.7995067,2.9812706)(4.3936834,-1.5833442)(7.136144,-2.4145544)
\pspolygon[linecolor=black, linewidth=0.02](4.8162565,2.978476)(8.409924,0.1657267)(7.161407,-2.4136543)
\pspolygon[linecolor=black, linewidth=0.02](12.866853,-0.9558881)(8.420342,0.15269212)(7.1740117,-2.4277458)
\pspolygon[linecolor=black, linewidth=0.02](12.865252,-0.9518255)(9.523942,-4.060145)(7.1721635,-2.4227266)
\pspolygon[linecolor=black, linewidth=0.02](12.890776,-0.9414156)(9.5456295,-4.0735707)(11.035921,-6.521229)
\pspolygon[linecolor=black, linewidth=0.02](12.895707,-0.93856764)(13.69782,-5.431074)(11.047332,-6.52053)
\end{pspicture}
}
\end{figure}

Fig. 1. Development of triangular billiard along the trajectory.

\

\

Another useful construction, associated with the polygonal billiard is the geodesic flow on the flat sphere with a finite number of conical singularities. Namely we take a copy $B$ of the polygon $A$ and glue them along corresponding sides. The resulting object is a sphere $S$ with a flat metric on it and a finite number of singularities, corresponding to the vertices of $A$. We slightly modify billiard flow $T_t$ on the polygon $A$. Namely, having a billiard orbit $T_t$ in $A$, we continue it in its copy $B$, after it hits the side of $A$. Analogous thing we make with the billiard in $B$. So the resulting flow "jumps" between $A$ and $B$ after each reflection, thus giving a geodesic flow $\overline{T_t}$ on $S$.
\

\

\textbf{Remark 1.} On has to be a bit careful about relation between \textit{periodic} orbits of flows $T_t$ and $\overline{T_t}$. Namely if $T$ is a periodic orbit of $T_t$ of \textit{even} period (meaning an even number of reflections) then it does correspond to the periodic orbit $\overline{T}$ of $\overline{T_t}$, which is not the case for periodic orbits of an odd period. The issue here is that any orbit on $S$ "jumps" between $A$ and $B$, so there has to be an even number of reflections ( or "side-crossings" on $S$) in order for it to return to the "correct" side.
\

\
\begin{figure}
\psscalebox{1.0 1.0} 
{
\begin{pspicture}(0,-5.979097)(14.034969,1.0727091)
\definecolor{colour0}{rgb}{0.0,0.5019608,0.0}
\pspolygon[linecolor=black, linewidth=0.03](0.02895874,-1.3140969)(2.0089588,1.005903)(4.888959,0.285903)(5.508959,-2.414097)(3.2889588,-5.954097)(3.2889588,-5.954097)
\pspolygon[linecolor=black, linewidth=0.03](8.528958,-1.274097)(10.508959,1.045903)(13.388959,0.32590303)(14.008959,-2.3740969)(11.788959,-5.914097)(11.788959,-5.914097)
\psline[linecolor=blue, linewidth=0.03, arrowsize=0.05291667cm 2.0,arrowlength=1.4,arrowinset=0.0]{->}(2.8089588,-3.514097)(5.2689586,-1.374097)
\psline[linecolor=blue, linewidth=0.03, linestyle=dashed, dash=0.17638889cm 0.10583334cm, arrowsize=0.05291667cm 2.0,arrowlength=1.4,arrowinset=0.0]{->}(11.308959,-3.534097)(13.768959,-1.394097)
\psline[linecolor=red, linewidth=0.03, arrowsize=0.05291667cm 2.0,arrowlength=1.4,arrowinset=0.0]{->}(13.748959,-1.374097)(8.8289585,-0.934097)
\rput[bl](2.3289587,-0.21409698){A}
\rput[bl](10.608959,-0.21409698){B}
\psline[linecolor=red, linewidth=0.03, linestyle=dashed, dash=0.17638889cm 0.10583334cm, arrowsize=0.05291667cm 2.0,arrowlength=1.4,arrowinset=0.0]{->}(5.3089585,-1.354097)(0.38895875,-0.914097)
\psline[linecolor=colour0, linewidth=0.03, arrowsize=0.05291667cm 2.0,arrowlength=1.4,arrowinset=0.0]{->}(0.36895874,-0.894097)(0.38895875,-1.754097)
\end{pspicture}
}
\end{figure}

Fig. 2. Two copies of the billiard table $A$ and $B$ are glued together along sides and form flat sphere with conical singularities at vertices.  Blue geodesic trajectory at the side $A$ of the sphere instead of reflecting from the boundary, continues as a red trajectory at the side $B$ of the sphere. Then red trajectory instead of reflecting from the boundary of $B$, continues as a green trajectory on the side $A$ of the sphere.
\

\

\begin{Lemma} Let  $M$ be a $2$-dimensional oriented manifold with a flat metric, $x_0\in M$ and $v_0\in T_{x_0}(M)$. Let us also assume that the geodesic $\gamma_0$ of length $L$ which starts at point $x_0$ in the direction of the vector $v_0$ is closed. Let $x=x(t)$, $t\in[0, 1]$ be a continuous curve on $M$ and $x(0)=x$. Let $v_t$ be a parallel translation of the vector $v_0$ along the curve $x(t)$ and let $\gamma_t$ be a geodesic of length $L$ which starts at the point $x_t$ in the direction $v_t$. Then $\gamma_t$ is closed for $t\in [0, 1]$.

\end{Lemma}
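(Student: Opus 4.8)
The statement to prove is the following:

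\emph{Lemma.} Let $M$ be a $2$-dimensional oriented manifold with a flat metric, $x_0\in M$ and $v_0\in T_{x_0}(M)$. Assume the geodesic $\gamma_0$ of length $L$ starting at $x_0$ in direction $v_0$ is closed. Let $x(t)$, $t\in[0,1]$, be a continuous curve on $M$ with $x(0)=x_0$, let $v_t$ be the parallel translate of $v_0$ along $x(t)$, and let $\gamma_t$ be the length-$L$ geodesic starting at $x_t$ in direction $v_t$. Then $\gamma_t$ is closed for all $t\in[0,1]$.

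\bigskip

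\textbf{Proof proposal.}

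The plan is to run a connectedness (open–closed) argument on the parameter set $E=\{t\in[0,1]: \gamma_t \text{ is closed}\}$. First I would set up the right notion of "closed" to make the continuity work: since $M$ is flat, holonomy around a contractible loop is trivial, so locally the geodesic flow looks like the straight-line flow on $\mathbb{R}^2$; the length-$L$ geodesic $\gamma_t$ is closed precisely when its endpoint $e(t):=\exp_{x_t}(L\,v_t)$ equals $x_t$ \emph{and} the incoming direction at the endpoint equals $v_t$. I would work in the unit tangent bundle $T^1M$: let $\phi_s$ be the geodesic flow, and consider the map $F:[0,1]\to T^1M$, $F(t)=(x_t,v_t)$, which is continuous because parallel transport along a continuous curve in a flat metric depends continuously on the parameter. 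Then $\gamma_t$ is closed iff $\phi_L(F(t))=F(t)$, i.e. iff $F(t)$ is a fixed point of $\phi_L$.

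The key step is a \emph{local rigidity} observation: in a flat surface, if $\phi_L(p)=p$ for some $p\in T^1M$, then there is a neighborhood $U$ of $p$ in $T^1M$ on which $\phi_L$ is an isometric (in particular locally constant on the "closed-orbit locus") map — more precisely, since the metric is flat and $\phi_L$ is the time-$L$ geodesic flow, in flat local coordinates $\phi_L$ is simply a translation by the vector $L v$ composed with the identity on directions, so $\{q\in U:\phi_L(q)=q\}$ is relatively open in $U$. Concretely: pick a flat chart around the closed geodesic $\gamma_{t_0}$ (a neighborhood of its image, developed into $\mathbb{R}^2$, which is possible because the geodesic together with a thin tube around it is simply connected if we are careful, or at worst we cover $\gamma_{t_0}$ by finitely many flat charts and track the developing map); in the developing picture the condition "$\gamma_t$ closes up" becomes "the developed endpoint coincides with the developed start and directions agree", and because developing maps a flat metric to genuine $\mathbb{R}^2$, this condition, once satisfied, persists under small perturbation of the starting point and direction, since the holonomy of the loop traced out is trivial and remains trivial. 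Hence $E$ is open in $[0,1]$.

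Closedness of $E$ follows from continuity of $F$ and of $\phi_L$: if $t_n\to t$ with each $\gamma_{t_n}$ closed, then $\phi_L(F(t_n))=F(t_n)$, and passing to the limit gives $\phi_L(F(t))=F(t)$, so $\gamma_t$ is closed. Since $E$ is nonempty ($0\in E$ by hypothesis), open, and closed in the connected set $[0,1]$, we get $E=[0,1]$, which is the claim.

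I expect the \textbf{main obstacle} to be making the openness step fully rigorous when $\gamma_{t_0}$ passes near, or its tube wraps around, nontrivial topology of $M$ (recall in our application $M$ is a flat sphere with cone points, so the relevant loops can be homotopically nontrivial in $M\setminus\{\text{cone points}\}$). The resolution is that flatness gives trivial \emph{local} holonomy, and the closed geodesic $\gamma_{t_0}$ has a tubular neighborhood that deformation retracts onto it, so the developing map is well-defined on a neighborhood of the perturbed geodesics as long as they stay in that tube; one must check the perturbed geodesic $\gamma_t$ does stay in the tube for $t$ near $t_0$, which is a routine compactness/continuity estimate on $[0,L]$. A secondary care point is that the endpoints must match \emph{with direction}, not just as points, so the fixed-point condition should be stated in $T^1M$ throughout, as above, rather than in $M$.
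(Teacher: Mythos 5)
Your proposal is correct and follows essentially the same route as the paper: an open--closed connectedness argument on the set of parameters $t$ for which $\gamma_t$ closes up, with openness coming from local flatness (your developing-map/geodesic-flow fixed-point formulation is the same mechanism as the paper's locally isometric shift map $S_L$) and closedness from passing to limits of closed geodesics of length $L$. Your extra care about matching directions (working in $T^1M$) and about trivial holonomy along the loop is exactly the content hidden in the paper's phrase ``$S_L$ is a local isometry,'' so no substantive difference.
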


\begin{proof} For any point $y\in M$, a vector $v\in T_y(M)$ and a constant $L>0$ we consider a shift map $S_L$, defined as follows. Consider a small enough disk $U$ centered at $y$ and a point $a\in U$. Let
$u\in T_aM$ be a parallel translate of $v$ and $\gamma_u(s)$, $s\in[0, L]$ is a geodesic of length $L$ from $a$ into direction $u$. Then we define $S_L(a)=\gamma_u(L)$. It is easily seen that $S_L$ is a local isometry.
\

\

Let now $x_t, v_t$, $t\in[0, 1]$ be as in the conditions of the Lemma. Let $C\subseteq[0, 1]$ be such that for $t\in C$ the trajectory $\gamma_t$ of length $L$ is a closed geodesic. As $S_L$ is a local isometry then $C$ is open. But of course $C$ is closed as the limit of closed geodesics of length $L$ is also a closed geodesic of length $L$. As $C$ is non-empty, it implies that $C=[0, 1]$.

\end{proof}

Fig. 3. Reflecting triangle about right angle 3 times, we make an associated rhombus. Any billiard trajectory in a rhombus naturally projects to the triangle billiard trajectory. For any reflection of the trajectory in the rhombus, there are at most 2 more reflections of the triangular trajectory, corresponding to "diagonal-crossings".
\

\

Another useful lemma will be used later on and will allow us to consider a rhombus instead of triangle which turns out to be slightly easier from technical point of view. We first take a right triangle and reflect it 3 times about the right angle to get a rhombus. As fig.3 shows then the billiard in the rhombus naturally "projects' to the billiard in triangle. As one can see, each trajectory of rhombus billiard can cross at most 2 triangle sides between the two consecutive reflections. That means that any generalized diagonal of rhombus billiard of algebraic length $n$ corresponds to the generalized diagonal of a triangle billiard of length no more than $3n$.
\

\

\textbf{Remark 2.}. The generalized diagonal of a rhombus may possibly pass through the center of the rhombus on its way between rhombus vertices. In this case it \textit{still} gives a generalized diagonal for the triangle billiard, only of the smaller algebraic length.
\

\

Summarizing the arguments above, we have the following useful statement. Let $P_n^{\triangle}$ be a complexity function for a right triangle, and $P_n^{\diamondsuit}$ be a complexity function for a corresponding rhombus.

\begin{Lemma} $P_{3n}^{\triangle}\geq P_n^{\diamondsuit}$
\end{Lemma}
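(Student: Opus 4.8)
The plan is to make precise the projection $R\to T$ indicated in Fig.~3 and to use it to compare the two complexity functions directly. Let $\pi\colon R\to T$ be the map that folds the rhombus onto the triangle by identifying the four reflected copies of $T$ that make up $R$; under $\pi$ the two diagonals of $R$ map (two-to-one, folding at the centre) onto the two legs of $T$, the four sides of $R$ map onto the hypotenuse, the four vertices of $R$ map to the two non-right-angle vertices of $T$, and the centre of $R$ maps to the right-angle vertex of $T$. I want to show that $\pi$ induces a one-to-one assignment from the generalized diagonals of $R$ of length $\le n$ to the generalized diagonals of $T$ of length $\le 3n$; once this is established, simply counting gives $P_n^{\diamondsuit}\le P_{3n}^{\triangle}$, which is the assertion.

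First I would check that $\pi$ carries billiard orbits of $R$ to billiard orbits of $T$: a crossing of a diagonal of $R$ corresponds to passing between two adjacent reflected copies of $T$, which after folding is a reflection in the corresponding leg, while a reflection of the orbit in a side of $R$ becomes a reflection in the hypotenuse. Hence a generalized diagonal $\delta$ of $R$, being a billiard orbit joining two vertices of $R$, maps to a billiard orbit of $T$ joining two vertices, i.e.\ a generalized diagonal of $T$; in the exceptional situation of Remark~2, where $\delta$ runs through the centre of $R$, the orbit $\pi(\delta)$ meets the right-angle vertex of $T$ and I truncate it there, which only shortens it. For the length bound I would use that the two diagonals of $R$ are straight segments, so any sub-arc of $\delta$ between two consecutive reflections in $\partial R$ meets them in at most two points and therefore contributes at most two extra reflections to $\pi(\delta)$; adding these to the $n$ reflections of $\delta$ in $\partial R$, which are retained, bounds the length of $\pi(\delta)$ by $3n$. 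This is exactly the bookkeeping already sketched before the statement.

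The step I expect to require the most care is the injectivity of the assignment $\delta\mapsto\pi(\delta)$. I would recover $\delta$ from $\gamma=\pi(\delta)$ using developments: unfolding $\gamma$ along its orbit produces a straight segment in a planar chain of reflected copies of $T$ in which the consecutive gluing edges have length $a$, $b$ (legs) or $c=\sqrt{a^{2}+b^{2}}$ (hypotenuse); since $a,b<c$, this length data distinguishes the reflections of $\gamma$ in the legs of $T$ — which, on folding back up, are precisely the crossings of the diagonals of $R$ — from the reflections in the hypotenuse, which are the reflections of $\delta$ in $\partial R$. Reading off, in order, this pattern of crossings and reflections together with the prescribed pair of endpoint vertices of $R$ reconstructs $\delta$. (Should the intrinsic symmetry of the rhombus make the refolding ambiguous, the only effect is that the assignment becomes finite-to-one with a multiplicity bounded by an absolute constant; this weakens $P_n^{\diamondsuit}\le P_{3n}^{\triangle}$ only by that constant, which is immaterial for the use made of the lemma.) Combining the three points yields $P_{3n}^{\triangle}\ge P_n^{\diamondsuit}$.
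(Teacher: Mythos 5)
Your proposal is correct and follows essentially the same route as the paper, whose proof of this lemma is exactly the folding/unfolding bookkeeping sketched before the statement (at most two diagonal crossings between consecutive reflections in the rhombus boundary, so algebraic length $n$ becomes at most $3n$, with the centre-crossing case handled as in Remark~2). Your extra discussion of injectivity of the projection, and the observation that the rhombus symmetry could at worst make it boundedly finite-to-one (harmless for how the lemma is used), addresses a point the paper passes over silently.
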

\

\

\begin{figure}
\psscalebox{1.0 1.0} 
{
\begin{pspicture}(0,-2.945)(10.46,0.965)
\psdiamond[linecolor=black, linewidth=0.03, dimen=outer](5.23,-0.995)(5.23,1.95)
\psline[linecolor=black, linewidth=0.03](0.02,-1.005)(10.4,-1.005)
\psline[linecolor=black, linewidth=0.03](5.22,-2.925)(5.22,0.955)
\psline[linecolor=blue, linewidth=0.03, arrowsize=0.05291667cm 2.0,arrowlength=1.4,arrowinset=0.0]{->}(1.38,-1.485)(5.66,0.755)
\psline[linecolor=red, linewidth=0.03, arrowsize=0.05291667cm 2.0,arrowlength=1.4,arrowinset=0.0]{->}(5.64,0.755)(6.56,-2.405)
\end{pspicture}
}
\end{figure}

Fix a rhombus vertex and consider the set of trajectories emanating from the vertex through the fixed angular segment $I$, with a fixed algebraic length $n$, such that no trajectory hits another vertex up to $n$ reflections. Such a set is called \textit{angular} $I$-\textit{beam}. As angular $I$-beam does not hit a vertex( except its boundary rays) , the development up to $n$ reflections is the same for all the trajectories inside it.(see Fig.4)
\

\
\begin{figure}
\psscalebox{1.0 1.0} 
{
\begin{pspicture}(0,-5.916918)(13.9420595,1.8355953)
\psdiamond[linecolor=black, linewidth=0.03, dimen=outer, gangle=-84.83724](2.938032,-0.32323158)(1.19,2.95)
\psdiamond[linecolor=black, linewidth=0.03, dimen=outer, gangle=-40.73992](3.9232721,-2.2663481)(1.19,2.95)
\psdiamond[linecolor=black, linewidth=0.03, dimen=outer, gangle=-356.49136](6.0084157,-2.9724472)(1.19,2.95)
\psdiamond[linecolor=black, linewidth=0.03, dimen=outer, gangle=-312.136](7.9975796,-2.0427017)(1.19,2.95)
\psdiamond[linecolor=black, linewidth=0.03, dimen=outer, gangle=-356.0042](9.96758,-1.1072338)(1.19,2.95)
\psdiamond[linecolor=black, linewidth=0.03, dimen=outer, gangle=-39.876133](12.050726,-1.7501189)(1.19,2.95)
\psline[linecolor=blue, linewidth=0.04, arrowsize=0.05291667cm 2.0,arrowlength=1.42,arrowinset=0.0]{->}(0.04090305,-0.5539782)(8.75407,-1.1519033)(8.75407,-1.1519033)
\psline[linecolor=blue, linewidth=0.04, arrowsize=0.05291667cm 2.0,arrowlength=1.42,arrowinset=0.0]{->}(0.05434477,-0.57886434)(12.903196,-2.4951653)
\psline[linecolor=blue, linewidth=0.02, arrowsize=0.05291667cm 2.0,arrowlength=1.42,arrowinset=0.0]{->}(0.035180837,-0.5731421)(12.985305,-2.2900841)
\psline[linecolor=blue, linewidth=0.02, arrowsize=0.05291667cm 2.0,arrowlength=1.42,arrowinset=0.0]{->}(0.021739118,-0.548256)(13.065415,-2.021789)
\psline[linecolor=blue, linewidth=0.02, arrowsize=0.05291667cm 2.0,arrowlength=1.42,arrowinset=0.0]{->}(0.04090305,-0.5539782)(13.164691,-1.7592163)
\psline[linecolor=blue, linewidth=0.02, arrowsize=0.05291667cm 2.0,arrowlength=1.42,arrowinset=0.0]{->}(0.021739118,-0.548256)(13.263966,-1.4966435)
\end{pspicture}
}
\end{figure}

Fig. 4. Trajectories of the angular beam have the same development up to $n$ reflections. On the figure above $n=6$.
\

\

Similar notion is the \textit{parallel} $\mu$-\textit{beam}. Here we take a set of parallel trajectories of width $\mu$, emanating from some segment on the rhombus side and not hitting a vertex up to $n$-reflections. By length of the parallel $\mu$-beam we mean the length of its middle trajectory. As the parallel $\mu$-beam does not hit a vertex, all the trajectories of a beam have the same development up to $n$ reflections.
\

\

\section{Interval partitions}

We consider a rhombus, a fixed vertex and the corresponding angular segment located at the vertex, which we naturally associate with an interval $I = [0, 1]$ using the normalized angular distance on it. Points on the interval then correspond to directions of rays emanating from the vertex. We introduce a useful reduced quantity $Q_n$ as a number of generalized diagonals of algebraic length no greater than $n$ emanating from the fixed vertex .
Now let us create a decreasing sequence of finite indexed partitions $\zeta_n $ of $I$ on subintervals as follows. The intervals of $\zeta_n$ are formed by the points of $I$ corresponding to the generalized diagonals of algebraic length no greater than $n$ and the endpoints of $I$. We define \textsl{the index} of a cutting point of $\zeta_n$ as the algebraic length of the corresponding generalized diagonal.
Immediate observation is that $Q_n$ equals to the number $|\zeta_n|$ of cutting points of $\zeta_n$. By the diameter \textbf{diam}$(\zeta_n)$ we mean the maximal length of the division intervals.

\begin{Lemma} Let $\zeta_n$ be a sequence of indexed partitions of the interval $[0, 1]$ satisfying the following properties:
\

\

1) There exists $\gamma>0$ such that for all $n$ large enough $|\zeta_n|<n^{\gamma+1}$
\

\

2) There exists $c>0$ such that diam$(\zeta_n)< \frac{c}{n}$
\

\

Then there exist constants $P, K>0$ such that for all $n$ large enough there exists an interval $I\in \zeta_n$ such that:
\

\

 1)$|I|>\frac{P}{n^{\gamma+1}}$
 \

 \

 2) Left and right indices of $I$ are greater than $K n^{\frac{1}{1+\gamma}}$

\end{Lemma}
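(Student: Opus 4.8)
The plan is to single out the cutting points of \emph{low} index and to show that the short intervals flanking them cannot cover all of $[0,1]$. Fix once and for all a constant $K>0$ small enough that $2cK^{\gamma+1}\le\tfrac14$, and for each large $n$ put $m=\lfloor K\,n^{1/(1+\gamma)}\rfloor$, so that $m\to\infty$ and $m^{\gamma+1}\le K^{\gamma+1}n$. Since the sequence $\zeta_n$ is decreasing and the index of a cutting point is its intrinsic algebraic length (independent of $n$), the cutting points of $\zeta_n$ of index at most $m$ are exactly the cutting points of $\zeta_m$; by hypothesis 1) applied to $\zeta_m$ (legitimate because $m$ is large once $n$ is), their number is less than $m^{\gamma+1}\le K^{\gamma+1}n$.

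Next I would separate the intervals of $\zeta_n$ into \emph{bad} ones --- those having an endpoint which is either a cutting point of index $\le m$ or one of the points $0,1$ --- and \emph{good} ones. Each low-index cutting point is an endpoint of at most two intervals, and the points $0,1$ contribute at most two more, so there are fewer than $2K^{\gamma+1}n+2$ bad intervals. Now hypothesis 2) enters: every interval of $\zeta_n$ has length $<c/n$, so the bad intervals together cover length less than
\[
\bigl(2K^{\gamma+1}n+2\bigr)\cdot\frac{c}{n}=2cK^{\gamma+1}+\frac{2c}{n}\le\frac14+\frac{2c}{n}<\frac12
\]
for all $n$ large. Hence the good intervals cover total length exceeding $\tfrac12$, while their number is at most $|\zeta_n|+1<n^{\gamma+1}+1\le 2n^{\gamma+1}$; by the pigeonhole principle some good interval $I$ satisfies $|I|>\dfrac{1/2}{2n^{\gamma+1}}=\dfrac{1}{4n^{\gamma+1}}$, so one may take $P=\tfrac14$.

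It then remains to read off the indices. Being good, $I$ has both endpoints among the cutting points of $\zeta_n$ (neither being $0$ nor $1$), and each such endpoint has index strictly greater than $m=\lfloor K n^{1/(1+\gamma)}\rfloor$; since indices are integers this forces each index to exceed $K n^{1/(1+\gamma)}$, which is conclusion 2) with the very same $K$. The one genuinely delicate choice is that of the threshold $m$: the exponent $1/(1+\gamma)$ is exactly what makes $m^{\gamma+1}$ of order $n$, so that, weighed against the $O(1/n)$ diameter bound, the length swallowed by the intervals adjacent to the low-index cuts is a bounded constant rather than something growing with $n$; and $K$ must then be taken small enough to pin that constant below $1$. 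Once this balance is arranged, everything else is counting and pigeonhole.
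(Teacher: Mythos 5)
Your proof is correct, and it gets to the conclusion by a genuinely different decomposition than the paper's. The paper works "from abundance": it lays down a grid of mesh $\frac{c}{n}$, uses the diameter hypothesis to pick a cut point in every cell, forms the alternating intervals $[x_1,x_3],[x_3,x_5],\ldots$, discards those containing at least $6cn^{\gamma}$ cut points, and thereby manufactures on the order of $n$ intervals of $\zeta_n$ of length at least $\frac{1}{6n^{\gamma+1}}$; only then does it count the cut points of index below roughly $n^{\frac{1}{\gamma+1}}$ (at most $\frac{n}{24c}$ of them, by the same nestedness property you invoke) and conclude by comparing cardinalities that most of these long intervals have both indices large. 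You run the exclusion first, and by measure rather than by cardinality: the intervals adjacent to low-index cut points each have length less than $\frac{c}{n}$ by the diameter bound, and since there are fewer than $K^{\gamma+1}n$ low-index points (hypothesis 1 applied at the threshold $m=\lfloor Kn^{1/(1+\gamma)}\rfloor$), the bad intervals cover less than half of $[0,1]$; a single pigeonhole over the at most $2n^{\gamma+1}$ remaining intervals then produces one good interval of length greater than $\frac{1}{4n^{\gamma+1}}$. So the same three ingredients appear — the counting hypothesis at scale $m\sim n^{1/(1+\gamma)}$, the diameter bound, and pigeonhole — but you use the diameter bound as an upper bound on the lengths of the bad intervals, while the paper uses it as a density statement to create many long intervals; your route is shorter and dispenses with the auxiliary sets $F_n, G_n, J_n, X, Y, Z$. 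Both arguments (yours explicitly, the paper's in the step $R_n\subseteq\zeta_{[\frac{n}{24c}]^{1/(\gamma+1)}}$) rely on the nested structure of the $\zeta_n$ and on indices being the integer stage at which a cut point appears, which is not literally in the abstract statement of the lemma but is exactly how the paper itself reads it, so you are on the same footing there; your integrality remark is what lets you keep the same $K$ in conclusion 2, and otherwise one would simply shrink $K$ slightly.
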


\begin{proof}
 Assuming that $n$ is large enough we consider a division of the interval $[0, 1]$ on the intervals $I_1=[0, \frac{c}{n}]$, $I_2=[\frac{c}{n}, \frac{2c}{n}]$,$\ldots$, $I_k=[\frac{(k-1)c}{n},\frac{kc}{n}]$, $\ldots$,
 $k\in\{1,[\frac{n}{c}]\}$ and one (possibly consisting of one point) short extra-interval
 $[1-[\frac{n}{c}]\frac{c}{n}, 1]$.
By condition 2 we can pick some points $x_k\in \zeta_n\cap I_k$,  $k\in\{1,[\frac{n}{c}]\}$ and form a finite set $F_n=\{x_1, x_2,\ldots, x_k, \ldots, x_{[\frac{n}{c}]}\}$ of cardinality $[\frac{n}{c}]$.
Then we choose a subset $G_n\subseteq F_n$, $G_n=\{x_1, x_3, x_5, x_7,\ldots\}$ which can be easily seen to satisfy the following two conditions for all $n$ large enough:
\

$1)$ The cardinality $|G_n|> \frac{n}{3c}$
\

$2)$ $\frac{c}{n}\leq Diam(G_n) \leq \frac{3c}{n}$.

\

Let us now consider the set of intervals $J_n=\{[x_1, x_3], [x_3, x_5], [x_5, x_7],\ldots \}$. The cardinality $|J_n|>\frac{n}{3c}$ for all $n$ large enough.
\

\

Let us now divide a set $J_n$ on two subsets $X$ and $Y$ characterized by the following properties:
\

\

$1)$ An interval $J=[a, b] \in J_n$ belongs to $X$ if the cardinality $\zeta_n\cap (a, b)< 6cn^{\gamma}$
\

$2)$ An interval $J=[a, b] \in J_n$ belongs to $Y$ if the cardinality $\zeta_n\cap (a, b)\geq 6cn^{\gamma}$
\

\

As the cardinality $|\zeta_n|<n^{\gamma+1}$ it implies that $|Y|6cn^{\gamma}<n^{\gamma+1}$ and so $|Y|<\frac{n}{6c}$. As $X=J_n-Y$ it implies that $|X|>\frac{n}{3c}-\frac{n}{6c}=\frac{n}{6c}$. As each interval $J$ from $X$ has less than $6cn^{\gamma}$ points of $\zeta_n$ it follows that inside $J$ there is an interval $I\in\zeta_n$  of length at least $\frac{1}{6n^{\gamma+1}}$. Let $Z$ be a set of such intervals $I$. Clearly $|Z|\geq\frac{n}{6c}$.

\

Let $R_n\subseteq\zeta_n$ be a set of points, whose indices are less than ${[\frac{n}{24c}]}^{\frac{1}{\gamma+1}}$. Then, by definition, $R_n\subseteq  {\zeta}_{{[\frac{n}{24c}]}^{\frac{1}{\gamma+1}}}$ and so
the cardinality $|R_n|<|{\zeta}_{{[\frac{n}{24c}]}^{\frac{1}{\gamma+1}}}|<{[{[\frac{n}{24c}]}^{\frac{1}{\gamma+1}}]}^{\gamma+1}=\frac{n}{24c}$.
\

It follows that at least $|Z|-\frac{n}{24c}$ left ends of intervals from $Z$ have indices greater than ${[\frac{n}{24c}]}^{\frac{1}{\gamma+1}}$ and the same is true for right ends. This implies that at least $|Z|-\frac{n}{12c}$ intervals from $Z$ have both left and right indices greater than ${[\frac{n}{24c}]}^{\frac{1}{\gamma+1}}$. As $|Z|>\frac{n}{6c}$ the conclusion of the lemma follows.
\end{proof}

\begin{figure}
\psscalebox{1.0 1.0} 
{
\begin{pspicture}(0,-5.539643)(10.022228,0.90750915)
\psdiamond[linecolor=black, linewidth=0.03, dimen=outer, gangle=-10.855984](4.7328897,-2.2885551)(1.44,3.25)
\psdiamond[linecolor=black, linewidth=0.03, dimen=outer, gangle=-59.48238](7.22244,-3.7842927)(1.44,3.25)
\psdiamond[linecolor=black, linewidth=0.03, dimen=outer, gangle=-322.41183](1.9824399,-2.9642928)(1.44,3.25)
\psline[linecolor=blue, linewidth=0.03, arrowsize=0.05291667cm 2.0,arrowlength=1.42,arrowinset=0.0]{->}(4.0676646,-5.156424)(4.287665,-5.156424)
\psline[linecolor=blue, linewidth=0.03, arrowsize=0.05291667cm 2.0,arrowlength=1.42,arrowinset=0.0]{->}(4.0076647,-4.856424)(4.5076647,-4.856424)
\psline[linecolor=blue, linewidth=0.03, arrowsize=0.05291667cm 2.0,arrowlength=1.42,arrowinset=0.0]{->}(3.8476648,-4.356424)(4.887665,-4.316424)
\psline[linecolor=blue, linewidth=0.03, arrowsize=0.05291667cm 2.0,arrowlength=1.42,arrowinset=0.0]{->}(3.6676648,-3.516424)(5.5076647,-3.436424)
\psline[linecolor=blue, linewidth=0.03, arrowsize=0.05291667cm 2.0,arrowlength=1.42,arrowinset=0.0]{->}(3.4876647,-2.676424)(6.1076646,-2.556424)
\psline[linecolor=blue, linewidth=0.03, arrowsize=0.05291667cm 2.0,arrowlength=1.42,arrowinset=0.0]{->}(3.767665,-3.916424)(5.207665,-3.896424)
\psline[linecolor=blue, linewidth=0.03, arrowsize=0.05291667cm 2.0,arrowlength=1.42,arrowinset=0.0]{->}(3.5876648,-3.0964239)(5.767665,-3.016424)
\psline[linecolor=blue, linewidth=0.03, arrowsize=0.05291667cm 2.0,arrowlength=1.42,arrowinset=0.0]{->}(4.5876646,-5.216424)(5.287665,-5.196424)
\psline[linecolor=blue, linewidth=0.03, arrowsize=0.05291667cm 2.0,arrowlength=1.42,arrowinset=0.0]{->}(4.847665,-4.876424)(5.5876646,-4.856424)
\psline[linecolor=blue, linewidth=0.03, arrowsize=0.05291667cm 2.0,arrowlength=1.42,arrowinset=0.0]{->}(5.5676646,-3.8364239)(6.227665,-3.8364239)
\psline[linecolor=blue, linewidth=0.03, arrowsize=0.05291667cm 2.0,arrowlength=1.42,arrowinset=0.0]{->}(5.887665,-3.436424)(6.5276647,-3.436424)
\psline[linecolor=blue, linewidth=0.03, arrowsize=0.05291667cm 2.0,arrowlength=1.42,arrowinset=0.0]{->}(6.1076646,-3.076424)(6.727665,-3.036424)
\psline[linecolor=blue, linewidth=0.03, arrowsize=0.05291667cm 2.0,arrowlength=1.42,arrowinset=0.0]{->}(6.5076647,-2.576424)(7.207665,-2.556424)
\psline[linecolor=blue, linewidth=0.03, arrowsize=0.05291667cm 2.0,arrowlength=1.42,arrowinset=0.0]{->}(5.187665,-4.416424)(5.9876647,-4.396424)
\psline[linecolor=red, linewidth=0.03, arrowsize=0.05291667cm 2.0,arrowlength=1.42,arrowinset=0.0]{->}(3.4476647,-2.476424)(6.0676646,-2.3764238)
\psline[linecolor=red, linewidth=0.03, arrowsize=0.05291667cm 2.0,arrowlength=1.42,arrowinset=0.0]{->}(3.3276649,-2.016424)(5.9676647,-1.8964239)
\psline[linecolor=red, linewidth=0.03, arrowsize=0.05291667cm 2.0,arrowlength=1.42,arrowinset=0.0]{->}(3.8476648,-1.2964239)(5.787665,-1.196424)
\psline[linecolor=red, linewidth=0.03, arrowsize=0.05291667cm 2.0,arrowlength=1.42,arrowinset=0.0]{->}(4.307665,-0.63642395)(5.687665,-0.5964239)
\psline[linecolor=red, linewidth=0.03, arrowsize=0.05291667cm 2.0,arrowlength=1.42,arrowinset=0.0]{->}(4.707665,-0.01642395)(5.5076647,-0.01642395)
\psline[linecolor=red, linewidth=0.03, arrowsize=0.05291667cm 2.0,arrowlength=1.42,arrowinset=0.0]{->}(4.9876647,0.34357604)(5.4676647,0.34357604)
\psline[linecolor=red, linewidth=0.03, arrowsize=0.05291667cm 2.0,arrowlength=1.42,arrowinset=0.0]{->}(0.1676648,-0.99642396)(0.8676648,-0.976424)
\psline[linecolor=red, linewidth=0.03, arrowsize=0.05291667cm 2.0,arrowlength=1.42,arrowinset=0.0]{->}(0.2476648,-1.4364239)(1.0276648,-1.4364239)
\psline[linecolor=red, linewidth=0.03, arrowsize=0.05291667cm 2.0,arrowlength=1.42,arrowinset=0.0]{->}(0.3876648,-1.956424)(1.1476648,-1.956424)
\psline[linecolor=red, linewidth=0.03, arrowsize=0.05291667cm 2.0,arrowlength=1.42,arrowinset=0.0]{->}(0.5276648,-2.476424)(1.3276649,-2.476424)
\psline[linecolor=red, linewidth=0.03, arrowsize=0.05291667cm 2.0,arrowlength=1.42,arrowinset=0.0]{->}(0.6276648,-3.016424)(1.3876648,-2.996424)
\psline[linecolor=red, linewidth=0.03, arrowsize=0.05291667cm 2.0,arrowlength=1.42,arrowinset=0.0]{->}(0.7476648,-3.436424)(1.5276648,-3.456424)
\psline[linecolor=red, linewidth=0.03, arrowsize=0.05291667cm 2.0,arrowlength=1.42,arrowinset=0.0]{->}(0.8276648,-3.796424)(1.5676647,-3.776424)
\psline[linecolor=red, linewidth=0.05](5.307665,0.88357604)(3.3476648,-2.036424)(3.4876647,-2.636424)
\psline[linecolor=blue, linewidth=0.05](3.4876647,-2.676424)(4.1276646,-5.4764237)
\end{pspicture}
}
\end{figure}

Fig.5. Blue part of $X_n$ represents those points of $X_n$ which are mapped to $X_{n+1}$ by the rhombus development map $\mathcal{R}$. Correspondingly red part of $X_n$ is mapped to $X_{n-1}$ by $\mathcal{R}$.

\section{Rhombus development map}
\

Our construction of rhombus development map is partially motivated by [3], where authors have shown the abundance of perpendicular periodic orbits for right triangular billiards. On the coordinate $xy$-plane with counterclockwise orientation we fix some rhombus $R_0$ and arbitrarily choose a pair of opposite vertices $A$ and $B$. Let $\alpha$ be a rhombus angle, adjacent to the vertex $A$. Let us denote $R_n$ the rhombus, obtained by rotation of $R_0$ to the angle $n\alpha$, $n\in\mathbb{Z}$. Assuming that $\alpha$ is irrational, all $R_n$ are different up to a parallel translation. On the boundary of $R_n$ we consider the set $X_n$ of all points $x$, such that the vector $v=(1, 0)$ located at $x$, points inside $R_n$. As the boundary of $R_n$ consists of two pairs of parallel sides, it is clear that $X_n$ is the union of two adjacent sides of $R_n$.
\

\

Let $X=\cup X_n$ and let us define a rhombus development map $\mathcal{R}: X\longrightarrow X$. Picking a point $x\in X_n$ we shoot a billiard trajectory from $x$ along the vector $v=(1, 0)$. Depending on where this trajectory hits the boundary of $R_n$ we get a point $\mathcal{R}(x)$ belonging to $X_{n-1}$ or $X_{n+1}$ up to a parallel translation. Clearly $\mathcal{R}$ is an invertible map, well defined on $X$ except the countable number of points which hit vertices. This countable number of points will make no difference for our considerations, so slightly abusing the notation we will assume that $\mathcal{R}$ is defined on the whole set $X$.(See Fig. 5.)
\

\

Let $\lambda_n$ be a measure on $X_n$ defined as the Lebesgue measure of projection to the vertical direction. Clearly all $\lambda_n$ then define a measure $\lambda$ on $X$ preserved by $\mathcal{R}$.
\

\

Let us now consider a rotation $R_{\alpha}: S^1\longrightarrow S^1$ on irrational angle $\alpha$ on the circle $S^1=\mathbb{R}/\mathbb{Z}$.
For $\mu\in (0, 0.5)$ we define a \textit{hitting function} $L(\mu)$ as follows. $L(\mu)$ is a minimal integer such that the finite trajectory
$\{x, R_{\alpha}x, \ldots, R^{L(\mu)}_{\alpha}x\}$ is $\mu$-dense, meaning that any interval of length $\mu$ has a point of this finite set inside.

\begin{Lemma} For a typical angle $\alpha\in [0, \pi]$ and any $\epsilon>0$ there is a constant $C=C(\alpha, \epsilon)>0$  such that $L(\mu)< \frac{C}{{\mu}^{2+\epsilon}}$.

\end{Lemma}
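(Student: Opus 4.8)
The plan is to recast the statement as a question in the metric theory of continued fractions. Since a rotation is an isometry of $S^1$, the collection of gaps between cyclically consecutive points of the orbit $\{x,R_{\alpha}x,\dots,R_{\alpha}^N x\}$ does not depend on $x$, so we may assume $x=0$ and work with $\{0,\alpha,2\alpha,\dots,N\alpha\} \bmod 1$. This finite set is $\mu$-dense precisely when its largest gap is at most $\mu$; hence $L(\mu)$ is the least $N$ for which the maximal gap of $\{0,\alpha,\dots,N\alpha\}$ is $\le\mu$, and the problem becomes: how fast does this maximal gap shrink for a typical $\alpha$?

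First I would bring in the continued fraction machinery. Let $p_k/q_k$ be the convergents of $\alpha$, let $a_k$ be its partial quotients, and put $\eta_k=\|q_k\alpha\|$, where $\|\cdot\|$ denotes the distance to the nearest integer; recall the standard facts $\frac{1}{q_k+q_{k+1}}<\eta_k<\frac{1}{q_{k+1}}$, $\eta_{k-1}=a_{k+1}\eta_k+\eta_{k+1}$, and $q_k\ge F_k$ (Fibonacci), so the $q_k$ grow at least geometrically. By the three-distance theorem, the $q_{k+1}$ points $\{0,\alpha,\dots,(q_{k+1}-1)\alpha\}$ cut $S^1$ into gaps of only two lengths, the larger being $\eta_k+\eta_{k+1}<\frac{2}{q_{k+1}}$. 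Therefore, given $\mu$, if $k=k(\mu)$ denotes the least index with $q_{k+1}\ge 2/\mu$, the first $q_{k+1}$ points of the orbit are already $\mu$-dense, so $L(\mu)\le q_{k+1}$; moreover the minimality of $k$ gives $q_k<2/\mu$. Note that this half of the argument holds for every irrational $\alpha$.

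It remains to bound $q_{k+1}=a_{k+1}q_k+q_{k-1}\le(a_{k+1}+1)q_k$ by $C\mu^{-(2+\epsilon)}$. Since $q_k<2/\mu$ we have $\mu^{-(2+\epsilon)}>(q_k/2)^{2+\epsilon}$, so it is enough to know that $(a_{k+1}+1)q_k\le C'q_k^{\,2+\epsilon}$, i.e. $a_{k+1}+1\le C'q_k^{\,1+\epsilon}$. This is the point at which ``typical'' is used: by the Borel--Bernstein theorem, since $\sum_n n^{-2}<\infty$, for Lebesgue-almost every $\alpha$ there is $n_0(\alpha)$ with $a_n<n^2$ for all $n\ge n_0$; as $q_k$ grows geometrically in $k$, one has $(k+1)^2+1\le q_k^{\,1+\epsilon}$ for all $k\ge k_1(\alpha,\epsilon)$, so the required inequality holds for all $k\ge k_1$. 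The finitely many smaller indices correspond to $\mu$ bounded away from $0$, where $L(\mu)\le L(\mu_0)<\infty$ is uniformly bounded (recall $L$ is non-increasing in $\mu$); enlarging $C$ to cover this compact range finishes the argument.

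The only genuine obstacle is the number-theoretic input: that for a typical $\alpha$ the partial quotients --- equivalently the ratios $q_{k+1}/q_k$ --- grow at most polynomially in $k$. This is precisely what fails for Liouville-type $\alpha$ and is the reason the estimate is asserted only for typical angles; everything else (the reduction to maximal gaps, the three-distance theorem, Borel--Bernstein) is elementary. In fact the argument as sketched already yields the stronger $L(\mu)=O(\mu^{-(1+\epsilon)})$, but the bound $2+\epsilon$ claimed here is more than sufficient for the subsequent application, so there is no need to optimize.
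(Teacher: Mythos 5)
Your proof is correct, but it follows a genuinely different route from the paper. The paper bounds $L(\mu)$ crudely by $2q_nq_{n+1}$ (iterating the near-period $q_n$, whose rotation $\|q_n\alpha\|<1/q_{n+1}<\mu$ sweeps the circle in about $2q_{n+1}$ steps) and then controls the product $q_nq_{n+1}$ via the Khintchin--L\'evy theorem $\frac{\ln q_n}{n}\to\beta$ for a.e.\ $\alpha$, which is exactly what produces the exponent $2+\epsilon$. You instead invoke the three-distance theorem to get the sharp bound $L(\mu)\le q_{k+1}$ with $q_k<2/\mu$, and then control the single ratio $q_{k+1}/q_k\le a_{k+1}+1$ by Borel--Bernstein ($a_n<n^2$ eventually, a.e.), using the geometric growth $q_k\ge F_k$ to absorb the polynomial factor. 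Both arguments rest on a full-measure statement from the metric theory of continued fractions, just different ones; yours buys a strictly stronger conclusion, $L(\mu)=O(\mu^{-(1+\epsilon)})$ (essentially optimal, since $L(\mu)\ge \mu^{-1}$ trivially), while the paper's avoids the three-distance theorem at the cost of the weaker exponent, which is all it needs for Lemma 4.2 and Theorem 4.1. Your reduction to $x=0$, the handling of the finitely many small indices via monotonicity of $L$, and the bookkeeping $q_{k+1}\le(a_{k+1}+1)q_k\le q_k^{2+\epsilon}<(2/\mu)^{2+\epsilon}$ are all sound.
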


\begin{proof} Let $q_n$ be a sequence of continued fraction denominators for a number $\alpha$. By classical theorem of Khintchin, we have typically $\lim\limits_{n \to \infty} \frac{\ln(q_n)}{n}=\beta$. It implies that for $n$ large enough: $ e^{n(\beta-\epsilon)}<q_n<  e^{n(\beta+\epsilon)}$. By well-known estimate : $\frac{1}{2q_{n+1}}<\frac{1}{q_{n+1}+q_n}< ||\alpha q_n||<\frac{1}{q_{n+1}}$. Now we look for an integer $n$, such that
$\frac{1}{q_{n+1}}<\mu$. Using the estimate above we have: $e^{(n+1)(\beta+\epsilon)}> q_{n+1}>\frac{1}{\mu}$ which implies that $n=\frac{\ln (\frac{1}{\mu})}{\beta-\epsilon}$ works. Applying $\alpha$-rotation $q_n$ times we have $\frac{1}{2q_{n+1}}< ||\alpha q_n||<\frac{1}{q_{n+1}}<\mu$ which in turn immediately implies that applying $\alpha$-rotation $2q_{n+1}q_n$ times to any point $x\in S^1$ we obtain a $\mu$-dense set.
\

From the considerations above we obtain a final estimate $L(\mu)< 2q_{n+1}q_n< 2e^{(n+1)(\beta+\epsilon)}e^{n(\beta+\epsilon)}<Ce^{2n(\beta+\epsilon)}
<Ce^{2\frac{\beta+\epsilon}{\beta-\epsilon}\ln(\frac{1}{\mu})}<\frac{C}{\mu^{2+\epsilon}}$

\end{proof}

\begin{Lemma} For a typical rhombus billiard and any $\epsilon>0$ there is a constant $C=C(\epsilon)>0$ such that any parallel $\mu$-beam of length greater than $T(\mu)=\frac{C}{{\mu}^{3+\epsilon}}$ has a parallel trajectory, contained inside.

\end{Lemma}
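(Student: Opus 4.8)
The plan is to unfold the beam, keep track of how the unfolded copies of the rhombus turn, use the quantitative equidistribution of $R_\alpha$ from the preceding lemma to force a copy in which the beam runs almost perpendicularly into a side, and then promote that near-perpendicular configuration — using the width $\mu$ of the beam and Lemma 2.1 — to a genuine closed trajectory lying inside the beam.

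First I would develop the parallel $\mu$-beam $\mathcal B$ of direction $v$ and length $n>T(\mu)$ into a straight strip $\Sigma$ of width $\mu$ crossing a chain of copies $R^{(0)},R^{(1)},\dots,R^{(n)}$ of the rhombus, none of whose vertices meet $\Sigma$. Each $R^{(k)}$ is the image of $R$ under an element of the infinite dihedral group generated by the reflections in the two non-parallel side-directions of $R$, and I record for it a rotational index $a_k\in\mathbb Z$ (so that the two side-directions of $R^{(k)}$ are $a_k\alpha$ and $(a_k+1)\alpha$ modulo $\pi$) together with its reflection parity; consecutive copies satisfy $a_{k+1}=a_k\pm1$ and have opposite parity. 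Translated back to $R$: at the $k$-th reflection the beam meets a side of $R$ whose direction, measured against $v$, is congruent modulo $\pi$ to $c_k\alpha$ for an integer $c_k$ between $\min_{j\le n}a_j$ and $1+\max_{j\le n}a_j$, and the cross section of $\Sigma$ there projects to a subsegment $\bar\sigma_k$ of one of the four sides of $R$ of length at least $\mu$.

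The closing-up mechanism is a near-perpendicular reflection. Applying the preceding lemma to the relevant rotation number I would produce an integer $m$ with $|m|\le L(\mu/2)<C\mu^{-2-\epsilon}$ such that $m\alpha$ is within $\mu/2$ of $\pi/2$ modulo $\pi$, and, as in that lemma's proof, a continued-fraction near-return $q<C\mu^{-1-\epsilon}$ with $q\alpha$ within $\mu/2$ of $0$ modulo $\pi$, so that $m$ and $m+q$ are both near-perpendicular indices. If the rotational-index walk $\{a_k\}$ attains the value $m$, the beam is then running within angle $O(\mu)$ of perpendicular to one side of that copy; since the opposite parallel side lies at a fixed $O(1)$ distance, it is trapped between a near-perpendicular parallel pair, the part of $\mathcal B$ continued backwards nearly retraces $\Sigma$, and a second near-perpendicular reflection (at index $m+q$, met on the way back) closes one trajectory exactly. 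I would make ``exactly'' rigorous through the first return map of the billiard to the relevant side in directions near $v$: by area preservation it is an orientation-preserving isometry of an interval, so an approximate fixed configuration — overlapping projected cross sections with matching direction class, all within $\mu$ — forces a genuine periodic trajectory, and Lemma 2.1 propagates closedness across the full width of $\mathcal B$. The budget then reads $T(\mu)=C\mu^{-3-\epsilon}$, with the factor $\mu^{-2-\epsilon}$ coming from $L(\mu)$ and the extra $\mu^{-1}$ from a pigeonhole over the at most $|m|+q$ relevant indices and the at most $O(1/\mu)$ cross-section slots on each of the four sides.

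The main obstacle is the behaviour of the walk $\{a_k\}$: a priori it could oscillate in a bounded window forever and never reach the index $m$ that equidistribution supplies, so the near-perpendicular scheme cannot be invoked directly. I expect to handle this by a dichotomy: either the walk leaves every bounded window within $O(\mu^{-2-\epsilon})$ steps, and the argument above applies; or it stays confined, in which case $\mathcal B$ visits only boundedly many copies, all $n$ cross sections $\bar\sigma_k$ fall into an $O(1)$-length union of segments of $R$ carrying only $O(\mu^{-2-\epsilon})$ direction classes, and once $n>C\mu^{-3-\epsilon}$ a pigeonhole over (side, direction class, cross-section slot) produces two coincident phase configurations — hence, again by area preservation and Lemma 2.1, an exact closed trajectory inside $\mathcal B$. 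Turning this dichotomy into a single clean threshold $T(\mu)$, and in particular proving that a confined walk genuinely confines the beam to finitely many copies with a controlled number of direction classes, is the technical heart of the proof.
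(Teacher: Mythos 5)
Your proposal has a genuine gap, and it sits exactly where you yourself locate ``the technical heart.'' The paper's proof does not need your near-perpendicular closing mechanism at all (that idea, from Cipra--Hanson--Kolan, is only cited as motivation for the development map). Instead, Lemma 4.1 is used to prove \emph{unconditional} confinement of the index walk: because the rotation orbit of $\alpha$ is $\mu$-dense after $L(\mu)<C\mu^{-2-\epsilon}$ steps, there are indices $N_-,N_+<C\mu^{-2-\epsilon}$ at which the ``passage gap'' (the vertical extent of the part of $X_{N_\pm}$ through which the development can advance to the next index) is narrower than $\mu$; a beam of width $\mu$ cannot cross such a gap without hitting a vertex, so the walk $\{a_k\}$ is trapped in $[-N_-,N_+]$ for the whole life of the beam. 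Your dichotomy is not exhaustive and does not recover this: a $\pm1$ walk may leave any bounded window, but only after far more than $O(\mu^{-2-\epsilon})$ steps, so within the budget $T(\mu)$ you are guaranteed neither to reach the near-perpendicular index $m$ nor to be in your ``confined'' branch with a controlled window size. Moreover your closing step in the first branch is unsound as stated: an orientation-preserving isometry of an interval is a translation, and having an approximate fixed configuration does not force an exact fixed point, so ``nearly retraces'' plus a second near-perpendicular reflection does not by itself produce an exactly closed orbit; Lemma 2.1 can only propagate closedness once you already have one exactly closed geodesic.

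Once confinement is in hand, the paper finishes with a clean measure-preservation pigeonhole that replaces both branches of your dichotomy: the development map $\mathcal{R}$ preserves the vertical-projection measure $\lambda$, the initial cross-section $I$ has $\lambda(I)=\mu$, and the confined region $\bigcup_{n=-N_-}^{N_+}X_n$ has measure $O(\mu^{-2-\epsilon})$; hence within $2C\mu^{-3-\epsilon}$ iterates two images $I_p,I_q$ must intersect, necessarily inside the same rotational copy (distinct indices give non-parallel rhombi since $\alpha$ is irrational), and any point of $I_p\cap I_q$ shot in the horizontal direction is an exactly periodic trajectory lying inside the beam --- no discretization into direction classes and cross-section slots, and no approximate-to-exact step, is needed. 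Your fallback pigeonhole is morally this argument, but you never prove the quantitative confinement it requires, and you also omit the parity observation (the return is between \emph{parallel} copies, so $u=v$ and the period is even), which the paper establishes here because it is needed later when Lemma 2.1 is applied on the flat sphere.
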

\

\

\textbf{Remark.} The parallel trajectory, contained inside the parallel $\mu$-beam should not necessarily be one of the beam trajectories ,nor should it have its ends on the beam sides. It literally means a segment,
geometrically contained inside the beam as a subset, which represents a periodic trajectory of a rhombus billiard.
\

\

\begin{proof}Taking a parallel $\mu$-beam we assume that it starts from some rhombus $R_0$. Let us call $I\in X_0$ an initial side of the beam. It follows that the development of $R_0$ along the beam produces a sequence $R_{a_n}$ where $\{a_n\}$ is a sequence of integer numbers with the property $|a_{n+1}-a_n|=1$. Let us denote $I_n={\mathcal{R}}^n(I)\in X_n$.
\

 It follows that there exist  positive integers $N_{-}$ and $N_{+}$ such that the gap for passing from $R_{N_{-}}$ to $R_{-N_{-}-1}$ has width less than $\mu$ and the gap for passing from $R_{N_{+}}$ to $N_{+}+1$ position has with less than $\mu$ (see fig.6). From lemma 4.1  it follows that $N_{-}$ and $N_{+}$ both satisfy
 $N_{-},N_{+}<\frac{C}{\mu^{2+\epsilon}}$
\

\begin{figure}
\psscalebox{1.0 1.0} 
{
\begin{pspicture}(0,-5.555309)(8.018796,3.8170772)
\psdiamond[linecolor=black, linewidth=0.03, dimen=outer, gangle=-57.04132](3.4626632,-1.5938071)(1.41,3.96)
\psdiamond[linecolor=black, linewidth=0.03, dimen=outer, gangle=-96.293755](4.082663,-4.153807)(1.41,3.96)
\psline[linecolor=red, linewidth=0.04](0.99266326,-2.6738071)(1.0526633,-2.593807)
\psline[linecolor=red, linewidth=0.06](0.9726633,-2.6738071)(2.6926632,-0.43380708)
\psline[linecolor=red, linewidth=0.06](2.6926632,-0.43380708)(6.732663,0.5261929)
\psline[linecolor=blue, linewidth=0.06](0.15266328,-3.6938071)(0.93266326,-2.6938071)
\psdiamond[linecolor=black, linewidth=0.03, dimen=outer, gangle=-17.778597](1.3426633,0.046192933)(1.41,3.96)
\psline[linecolor=black, linewidth=0.06](0.95266324,-2.6938071)(4.232663,-2.733807)
\end{pspicture}
}
\end{figure}

Fig. 6. Blue part of $X_{N_{+}}$ is mapped to $X_{N_{+}+1}$ by $\mathcal{R}$ and the red part of $X_{N_{+}}$ is mapped to $X_{N_{+}-1}$ by $\mathcal{R}$. So no parallel $\mu$-beam can pass to $X_{N_{+}+1}$ without splitting if the vertical width of the blue gap is less than $\mu$.
\

\

But as the union $\bigcup\limits_{n=-N_{-}}^{N_{+}} X_n$ has a measure $\lambda(\bigcup\limits_{n=-N_{-}}^{N_{+}} X_n)< N_{+}-N_{-}<\frac{2C}{\mu^{2+\epsilon}}$ and as $\mathcal{R}$ preserves $\lambda$ and $\lambda(I)=\mu$ it follows that there exist
$p, q: 1\leq p\neq q\leq \frac{2C}{\mu^{3+\epsilon}} $,  such that $I_p\cap I_q\neq \varnothing$. Choosing a point $x\in I_n\cap I_m$ and connecting $x\in R_{a_n}$ to $x\in R_{a_m}$ we get a periodic trajectory of length no greater than $\frac{2C}{\mu^{3+\epsilon}}$ inside $\mu$-beam (see fig. 7).
\

\

What is important for us is that such a periodic trajectory necessarily has an \textit{even} period. Indeed assume that development from $R_{a_p}$ to $R_{a_q}$ has $u$ clockwise and $v$ counterclockwise $\alpha$-rotations. But then the rhombus $R_{a_q}$ would be obtained from $R_{a_p}$ by rotation to the angle $(u-v)\alpha$ up to a parallel translation. As the rhombi $R_{a_p}$ and $R_{a_q}$ are parallel and $\alpha$ is irrational, it implies that $u=v$, and so the trajectory has an even period.

\

\end{proof}
\

\
\begin{figure}
\psscalebox{0.8 0.8} 
{
\begin{pspicture}(0,-6.358333)(13.25,3.918333)
\definecolor{colour0}{rgb}{0.8,0.2,0.2}
\psdiamond[linecolor=black, linewidth=0.04, dimen=outer](1.46,-2.101667)(1.46,4.24)
\psdiamond[linecolor=black, linewidth=0.04, dimen=outer, gangle=38.0](4.04,-1.2216668)(1.46,4.24)
\psdiamond[linecolor=black, linewidth=0.04, dimen=outer](6.62,-0.32166687)(1.46,4.24)
\psdiamond[linecolor=black, linewidth=0.04, dimen=outer, gangle=-38.0](9.2,-1.2216668)(1.46,4.24)
\psdiamond[linecolor=black, linewidth=0.04, dimen=outer](11.79,-2.1183336)(1.46,4.24)
\psline[linecolor=black, linewidth=0.04](10.32,-2.061667)(0.26,-1.3816669)
\psline[linecolor=black, linewidth=0.04](10.8,-0.7816669)(0.74,-0.10166687)
\psline[linecolor=colour0, linewidth=0.1](0.26,-1.3616669)(0.7,-0.10166687)
\psline[linecolor=colour0, linewidth=0.1](10.32,-2.041667)(10.78,-0.7816669)
\psline[linecolor=colour0, linewidth=0.1](10.34,-2.061667)(10.8,-0.80166686)
\psline[linecolor=blue, linewidth=0.1](0.3,-1.1816669)(10.7,-1.1416669)
\end{pspicture}
}
\end{figure}

Fig.7. Periodic orbit is inside the $\mu$-beam connecting parallel rhombi $R_{a_p}$ and $R_{a_q}$ as soon as $I_p\cap I_q\neq \varnothing$. ($I_p$ is a left red segment and $I_q$ is a right red segment.)
\

\

\begin{thm} For a typical right triangular billiard and for any $\epsilon>0$ the complexity function $P_n$ satisfies: $\limsup \frac{P_n}{n^{\frac{2}{\sqrt{3}}-\epsilon}}>0$

\end{thm}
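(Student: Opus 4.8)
The plan is to argue by contradiction. Suppose $\limsup P_n/n^{\frac{2}{\sqrt3}-\epsilon}=0$ for some fixed $\epsilon>0$, so $P_n<n^{\frac{2}{\sqrt3}-\epsilon}$ for all large $n$. By Lemma 2.2 the same estimate (up to a constant) holds for the associated rhombus, hence for $Q_n=|\zeta_n|$; absorbing the constant, $|\zeta_n|<n^{s}$ for all large $n$ with $s:=\frac{2}{\sqrt3}-\epsilon/2<\frac2{\sqrt3}$. One also checks $\mathrm{diam}(\zeta_n)<c/n$: an angular beam of the fixed vertex surviving $n$ reflections develops into a chain of $\asymp n$ rhombi, so if its width exceeded $c/n$ its far end would be wider than one rhombus and would have to swallow a vertex, contradicting survival. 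Thus both hypotheses of Lemma 3.1 hold (with $\gamma+1=s$), and it produces, for every large $n$, an interval $I_n\in\zeta_n$ with $|I_n|>P\,n^{-s}$ and with both endpoint indices exceeding $m:=Kn^{1/s}$.

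Since $I_n$ is a single interval of $\zeta_n$, no trajectory of the $I_n$-beam meets a vertex within $n$ reflections; in particular the middle ray $\tau_n$ stays at distance $\gtrsim |I_n|\,kd$ from every developed vertex at the $k$-th reflection (a vertex closer than that would be hit by a ray whose direction lies strictly inside $I_n$, giving a generalized diagonal of length $\le n$ in the interior of $I_n$, which is impossible). Restricting attention to the first $\asymp m$ reflections, we may thicken $\tau_n$ into a genuine parallel $\mu$-beam that meets no vertex, of width $\mu\asymp |I_n|\cdot m$ and length $\asymp m$. Restricting to $\asymp m$ reflections is precisely what keeps the beam in the part of the fan we control, and it is this restriction that fixes the exponent.

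Now the arithmetic that yields $\tfrac2{\sqrt3}$. By Lemma 4.2 (with its $\epsilon$-parameter $\epsilon'>0$ chosen as small as we wish) the beam contains a closed trajectory, necessarily of even period by the rotation argument there, once its length exceeds $T(\mu)=C\mu^{-(3+\epsilon')}$. Here the length is $\asymp m=n^{1/s}$ and $\mu\asymp n^{1/s-s}$, so the required inequality $m>T(\mu)$ reads $\tfrac1s>(3+\epsilon')\bigl(s-\tfrac1s\bigr)$, i.e. $s^2<\tfrac{4+\epsilon'}{3+\epsilon'}$. As $s<\tfrac2{\sqrt3}$ we have $s^2<\tfrac43$, so the inequality holds for $\epsilon'$ small; hence for all large $n$ we get a closed rhombus trajectory of length $\le 2T(\mu)\lesssim n^{(3+\epsilon')(s-1/s)}$ lying inside the $I_n$-beam, and with $\epsilon'$ small this length is $\le n$.

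Finally, such a closed trajectory lies in a maximal cylinder of parallel closed trajectories. By Lemma 1.1, translating the base point transversally keeps the trajectory closed of the same length until it passes through a vertex; pushing the base point toward the apex vertex $V$ of the fan and using that this region of the fan contains no vertex, the cylinder first meets $V$, so one boundary trajectory of the cylinder is a generalized diagonal issuing from $V$, of length $\le 2T(\mu)\le n$, with direction strictly inside $I_n$ (the cylinder direction is that of a beam trajectory). That diagonal is a cutting point of $\zeta_n$ in the interior of $I_n$, contradicting $I_n\in\zeta_n$; this contradiction proves the theorem. (One can also bypass this last, most delicate step: Lemma 3.1 actually yields $\gtrsim n$ pairwise disjoint such intervals, hence $\gtrsim n$ closed trajectories with pairwise distinct directions, hence $\gtrsim n$ distinct generalized diagonals of length $\le 2T(\mu)$; comparing with $P_{2T(\mu)}<(2T(\mu))^{s}$ gives $(3+\epsilon')(s^2-1)<1$, again $s^2<\tfrac43$.) The main obstacle is this geometric core — extracting from the thin fan a parallel beam with exactly the length/width trade-off above, and converting the closed orbit it contains into an ``unwanted'' diagonal from $V$ inside $I_n$; the remainder is bookkeeping with the constants of Lemmas 3.1, 4.1 and 4.2.
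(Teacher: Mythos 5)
Your overall route is the paper's: contradiction with $P_n<n^{s}$, reduction to the rhombus and the reduced vertex complexity (Lemma 2.2), Lemma 3.1 producing an interval $I$ with $|I|\gtrsim n^{-s}$ and endpoint indices $\gtrsim n^{1/s}$, a parallel $\mu$-beam with $\mu\asymp|I|\cdot n^{1/s}$ inside the angular fan, Lemma 4.2, and the same arithmetic $(3+\epsilon')(s-\tfrac1s)<\tfrac1s$, i.e. $s^2<\tfrac43$, giving the exponent $\tfrac2{\sqrt3}$. The gap is in your last step. You assert that the closed trajectory supplied by Lemma 4.2 has ``the direction of a beam trajectory'', hence direction strictly inside $I$, so that parallel-transporting it (Lemma 2.1) toward the apex $V$ necessarily first meets $V$ and yields a diagonal from $V$ with direction interior to $I$. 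But Lemma 4.2 (see the Remark following it) only gives a periodic segment \emph{contained in the beam as a subset}: its developed chord joins two points of the horizontal strip whose heights may differ by up to $\mu$, so its direction can deviate from the beam direction by an angle of order $\mu\asymp n^{1/s-s}$, which is enormously larger than $|I|\asymp n^{-s}$. Thus the orbit's direction may well lie outside the angular range of $I$; in that case the dragged segment leaves the fan through one of the two boundary saddle connections before it can reach $V$, and outside the fan you control neither singularities nor the development, so neither ``first meets $V$'' nor ``direction strictly inside $I_n$'' is justified. This is precisely why the paper's proof splits into three cases according to the angle $\alpha$ of the periodic orbit relative to the fan, dragging the orbit either to the apex or to one of the two endpoint vertices of the boundary diagonals (the cut-points of $I$), and derives the contradiction of a new singular point inside the beam's development in each case.

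Your proposed shortcut in the parenthetical does not close this gap either: the $\gtrsim n$ disjoint intervals from the proof of Lemma 3.1 do give $\gtrsim n$ closed orbits, but these orbits need not have pairwise distinct directions (a single periodic orbit can sit inside many $\mu$-beams of different directions, since the directional uncertainty $\sim\mu$ swamps the $\sim n^{-s}$ spacing of the intervals), so the count of distinct generalized diagonals of length $\le 2T(\mu)$ is not established and the comparison with $P_{2T(\mu)}$ does not go through as stated. To repair your argument, replace the single drag-to-$V$ step by the paper's case analysis (or otherwise control where the cylinder of the closed orbit degenerates), keeping track that the dragged family stays within the part of the fan whose development is controlled by the endpoint indices $\gtrsim n^{1/s}$.
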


\begin{proof} Let us assume that there are positive constants $\gamma<\frac{2}{\sqrt{3}}-1$ and $C$ such that $P_n<Cn^{1+\gamma}$ for all $n$. By reflecting the triangle about the right angle we make a rhombus and by Lemma 2.2 we may assume that the rhombus complexity function also satisfies $P_n<Cn^{1+\gamma}$ ( with different constant $C$). Then we choose any vertex of the rhombus and consider the reduced complexity function for the vertex, which also then satisfies $P_n<Cn^{1+\gamma}$. Consider the sequence of associated indexed partitions for the vertex. Fix $n$ large enough.
\

Then by Lemma 3.1 there is an angular segment $I$ and positive constants $P, K$ ( both independent on $n$) such that:
\

\

$1)$ $|I|> Pn^{-\gamma-1}$
\

\

$2)$ The indices of left and right ends of $I$ are greater than $Kn^{\frac{1}{\gamma+1}}$.
\

\

Taking angular beam corresponding to the segment $I$ we see that the length of the large angular segment, corresponding to the both left and right indices of $I$ has length greater than
 $cn^{\frac{1}{\gamma+1}-\gamma-1}$ for some constant $c$ independent on $n$.
  \
  
  \
  
  Fig.8. Depending on the direction of the periodic trajectory inside the $\mu$-beam, one can drag it either to the original vertex or to one of the two boundary vertices of the interval $I$. As the dragged orbit is also periodic, it implies the existence of a singular point inside the angular beam, which is absurd.
  
  \
  
  \

  Let us pick $\mu=\frac{c}{2}n^{\frac{1}{\gamma+1}-\gamma-1}$ and $\epsilon$ small enough. We are interested to see if we can find a parallel $\mu$-beam of length $T(\mu)=\frac{C}{{\mu}^{3+\epsilon}}$ inside the angular $I$-beam up to the lower index of $I$  for $n$ large enough. Taking into account that the length of the angular $I$-beam is greater then
 $n^{\frac{1}{\gamma+1}}$ we get the sufficient condition:
 \

 \

\begin{figure}
\psscalebox{0.6 0.6} 
{
\begin{pspicture}(0,-8.329899)(23.558989,6.0282817)
\psline[linecolor=black, linewidth=0.04](0.21,1.9190906)(14.37,1.8990905)
\psline[linecolor=black, linewidth=0.04](0.19,1.9390906)(14.03,5.9990907)
\psline[linecolor=black, linewidth=0.04](5.03,1.9390906)(5.91,2.6990905)(8.07,2.6990905)(7.07,1.9190906)
\psline[linecolor=red, linewidth=0.04](5.15,2.0790906)(7.61,2.3590906)
\psline[linecolor=red, linewidth=0.04](0.17,1.9190906)(2.63,2.1990905)
\psline[linecolor=blue, linewidth=0.04, linestyle=dashed, dash=0.17638889cm 0.10583334cm, arrowsize=0.05291667cm 2.0,arrowlength=1.42,arrowinset=0.0]{->}(6.45,2.3390906)(2.27,2.0990906)
\psline[linecolor=black, linewidth=0.04](0.03,-3.4009094)(15.33,0.91909057)
\psline[linecolor=black, linewidth=0.04](0.03,-3.4009094)(15.61,-3.3809094)
\psline[linecolor=black, linewidth=0.04](10.35,-3.3609095)(9.97,-1.8409095)(13.43,-1.8009094)
\psline[linecolor=black, linewidth=0.04](13.41,-1.7609094)(13.93,-3.3409095)
\psline[linecolor=red, linewidth=0.04](10.31,-3.3009095)(13.43,-1.8609095)
\psline[linecolor=red, linewidth=0.04](12.21,-0.5209094)(15.33,0.91909057)
\psline[linecolor=blue, linewidth=0.04, linestyle=dashed, dash=0.17638889cm 0.10583334cm, arrowsize=0.05291667cm 2.0,arrowlength=1.42,arrowinset=0.0]{->}(11.57,-2.6409094)(13.83,0.13909058)
\psdots[linecolor=black, dotsize=0.14](15.31,0.85909057)
\psline[linecolor=black, linewidth=0.04](15.29,0.91909057)(15.63,-3.3609095)
\psdots[linecolor=black, dotsize=0.14](12.21,-0.5209094)
\psdots[linecolor=black, dotsize=0.14](23.49,-2.8209095)
\psdots[linecolor=black, dotsize=0.14](0.17,1.9190906)
\psdots[linecolor=black, dotsize=0.14](2.61,2.2190905)
\psline[linecolor=black, linewidth=0.04](0.01,-8.28091)(15.07,-8.28091)
\psline[linecolor=black, linewidth=0.04](0.03,-8.24091)(14.37,-5.8409095)
\psline[linecolor=black, linewidth=0.04](7.79,-8.260909)(7.99,-7.620909)(10.79,-7.600909)
\psline[linecolor=black, linewidth=0.04](10.77,-7.5809093)(10.61,-8.28091)
\psline[linecolor=red, linewidth=0.04](7.97,-7.6609097)(10.63,-8.16091)
\psdots[linecolor=black, dotsize=0.14](15.05,-8.260909)
\psline[linecolor=black, linewidth=0.04](14.35,-5.8209095)(15.05,-8.220909)
\psline[linecolor=red, linewidth=0.04](12.39,-7.7409096)(15.05,-8.24091)
\psdots[linecolor=black, dotsize=0.14](12.39,-7.7409096)
\psline[linecolor=blue, linewidth=0.04, linestyle=dashed, dash=0.17638889cm 0.10583334cm, arrowsize=0.05291667cm 2.0,arrowlength=1.42,arrowinset=0.0]{->}(8.89,-7.7409096)(13.29,-8.00091)
\end{pspicture}
}
\end{figure}

\

\

 $(-3-\epsilon)(\frac{1}{\gamma+1}-\gamma-1)<\frac{1}{\gamma+1}$.
 \

 \

 Looking for maximal $\gamma$ satisfying inequality above we take a limit case $\epsilon=0$ and get equation: $3(\gamma+1-\frac{1}{\gamma+1})=\frac{1}{\gamma+1}$ which gives a critical value
 ${\gamma}_{crit}=\frac{2}{\sqrt{3}}-1$. As by theorem conditions $\gamma<{\gamma}_{crit}$ then picking $\epsilon$ small enough we will be able to find a parellel $\mu$-beam of length $L(\mu)$ and so to satisfy the condition of the Lemma 4.2.
 \

 \

 Then by Lemma 4.2 we are able to find a periodic trajectory $T$ of even period inside the parallel $\mu$-beam and so inside an angular $I$-beam.
 \

 \

 Let $\alpha\in(-\frac{\pi}{2},\frac{\pi}{2})$ be an angle which $T$ forms with horizontal direction, and $\beta$ be the angle for $I$-beam. We have three cases:
 \

 \

 $1)$ $\alpha\in(-\frac{\pi}{2}, 0)$
 \

 \

 $2)$ $\alpha\in[0, \beta]$
 \

 \

 $3)$ $\alpha\in(\beta, \frac{\pi}{2})$
 \

 \

 Depending on the case we are able to drag the trajectory $T$ inside the angular $I$-beam or to the left cut-point of $I$ or to the right cut-point of $I$ or to the vertex of $I$. By Lemma 2.1 the dragged trajectory is also periodic, as periodic rhombus trajectories of even period correspond to the periodic trajectories on the associated flat sphere (See Remark 1.).  But that would give a contradiction because in each of the three cases we would get a new vertex inside the angular $I$ segment, as the dragged periodic trajectory starts from a vertex and so should end in a vertex.

\end{proof}
\

\newpage

\end{document}